\newtheorem{theorem}{Theorem}[section]
\newtheorem{prop}{Proposition}[section]
\newtheorem{coro}{Corollary}[section]
\newtheorem{lemma}[theorem]{Lemma}
\theoremstyle{definition}
\newtheorem{definition}[theorem]{Definition}
\theoremstyle{remark}
\newtheorem{remark}[theorem]{Remark}
\numberwithin{equation}{section}
\newcommand{\abs}[1]{\lvert#1\rvert}
\newcommand{\norm}[1]{\lVert#1\rVert}
\DeclareMathOperator{\im}{Im}
\newcommand{\ud}{\mathrm{d}}
\begin{document}

\title[Continuity of the Lyapunov Exponent]{Continuity of the Lyapunov Exponent for analytic quasi-perodic cocycles with singularities}

\author{S. Jitomirskaya and C. A. Marx}
\address{Department of Mathematics, University of California, Irvine CA, 92717}
\thanks{The work was supported by NSF Grant DMS - 0601081 and BSF, grant 2006483. .}





\begin{abstract}
We prove that the Lyapunov exponent of quasi-periodic cocyles with singularities behaves continuously over the analytic category. We thereby generalize earlier results, where singularities were either excluded completely or constrained by additional hypotheses. Applications are one-parameter families of analytic Jacobi operators, such as extended Harper's model describing crystals subject to external magnetic fields.
\end{abstract}

\maketitle
\centerline{ \small \it {Dedicated to Richard S. Palais on the occasion of
     his 85th birthday.}}

\section{Introduction} \label{sec_intro}

Denote by $\mathbb{T}:=\mathbb{R} / \mathbb{Z}$ the torus equipped with its Haar measure $\mu$, $\mu(\mathbb{T}) = 1$. Given $\beta$ irrational and a measurable $D: \mathbb{T} \to M_{2}(\mathbb{C})$ satisfying $\log{\abs{\det{D}}} \in L^1(\mathbb{T}, \ud \mu)$, a {\em{cocycle}} is a pair $(\beta,D(x))$, understood as linear skew-product acting on  $\mathbb{T} \times \mathbb{C}^{2}$ by $(x,v) \mapsto \left(x+\beta, D(x) v\right)$.

Using the sub-additive ergodic theorem, for any cocycle $(\beta, D)$ one can define the Lyapunov-exponent (LE) by
\begin{equation} \label{eq_LE}
L(\beta, D) = \lim_{n \to \infty} \frac{1}{n} \log \norm{D(x + (n-1) \beta) \dots D(x)} = \lim_{n \to \infty} \frac{1}{n} \int \norm{D(x)} \ud \mu(x) ~\mbox{.}
\end{equation}

In this paper we would like to analyze the dependence of the LE on the matrix valued function $D$ upon variation over the analytic category.  In view of the following definition, given  a Banach space $X$ and $\delta>0$, we denote by $\mathcal{C}_\delta^\omega(\mathbb{T}, X)$ the analytic $X$-valued functions on $\mathbb{T}$ with extension to a neighborhood of $\mathbb{T}_\delta:=\{\abs{\im{z}} \leq \delta\}$ (``the $\delta$-strip of $\mathbb{T}$''). 

\begin{definition} \label{def_anacoc}
Let  $(\beta,D(x))$ be a cocyle. If $D \in \mathcal{C}_\delta^\omega(\mathbb{T}, M_2(\mathbb{C}))$ for some $\delta>0$, we call $(\beta,D(x))$ an {\em{analytic cocyle}}. 
An analytic cocycle $(\beta,D(x))$ is called {\em{singular}} if $\det (D(x_0)) = 0$ for some $x_0 \in \mathbb{T}$, in which case $x_0$ is referred to as singularity of the cocycle $(\beta,D(x))$.
\end{definition}
We amend that analyticity automatically guarantees $\left \vert \int \log{\abs{\det{D}(x)}} \ud \mu(x) \right \vert < \infty$ (for a simple argument see the proof of Lemma \ref{eq_lemint}). 

For $\delta > 0$, let $D \in \mathcal{C}_\delta^\omega(\mathbb{T},M_2(\mathbb{C}))$.  Setting $\det{D(x)} =: d(x)$, for $d(x)$ not vanishing identically, analyticity allows for only finitely many zeros. In particular, for our analysis it will then prove useful to define the renormalization $D^\prime(x)$,
\begin{equation}
D^\prime(x) := \frac{1}{\sqrt{\abs{d(x)}}} D(x) ~\mbox{.}
\end{equation}
To simplify notation, we write 
\begin{equation} \label{prime}
L^{\prime}(\beta, D):=L(\beta, D^{\prime})
\end{equation}

Finally, the space $\mathcal{C}_\delta^\omega(\mathbb{T},M_2(\mathbb{C}))$ is naturally equipped with a topolgy induced by the norm $\vert \vert D(z) \vert \vert_\delta := \sup_{\abs{\im{z}} \leq \delta} \norm{D(z)}$, where $\norm{.}$ denotes the usual matrix norm. As our main result, we establish the following:
\begin{theorem} \label{thm_cont_new}
Let $\delta>0$. For fixed Diophantine $\beta$, the Lyapunov exponents $L(\beta, .)$ and $L^\prime(\beta,.)$ are continuous on $\mathcal{C}_\delta^\omega(\mathbb{T},M_2(\mathbb{C}))$ with respect to the topology induced by $\vert \vert . \vert \vert_\delta$. 
\end{theorem}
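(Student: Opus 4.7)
My plan is to reduce the continuity of both $L(\beta,\cdot)$ and $L'(\beta,\cdot)$ to a single main estimate. Writing $D^{(n)}$ for the $n$-step product and using $D(x)=\sqrt{\abs{d(x)}}\,D'(x)$, one obtains the pointwise identity
\begin{equation*}
\tfrac{1}{n}\log\norm{D^{(n)}(x)} \;=\; \tfrac{1}{2n}\sum_{k=0}^{n-1}\log\abs{d(x+k\beta)} \;+\; \tfrac{1}{n}\log\norm{D^{\prime(n)}(x)}
\end{equation*}
a.e.\ on $\mathbb{T}$, as long as $d\not\equiv 0$. Since $\log\abs{d}\in L^1(\mathbb{T})$ by Lemma \ref{eq_lemint}, Birkhoff's theorem together with (\ref{eq_LE}) yields
\begin{equation*}
L(\beta,D) \;=\; L'(\beta,D) \;+\; \tfrac{1}{2}\int_{\mathbb{T}}\log\abs{d(x)}\,\ud\mu(x)~\mbox{.}
\end{equation*}
Thus continuity of both $L(\beta,\cdot)$ and $L'(\beta,\cdot)$ is equivalent to: (a) continuity of $D\mapsto\int\log\abs{d(x)}\,\ud\mu(x)$, plus (b) continuity of just $L(\beta,\cdot)$.

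\textbf{Continuity of the determinant integral.} If $D_m\to D$ in $\norm{\cdot}_\delta$, then $d_m\to d$ uniformly on $\mathbb{T}_\delta$. Off any fixed open neighborhood of the finite zero set $Z(d)\subset\mathbb{T}$, the convergence $\log\abs{d_m}\to\log\abs{d}$ is uniform. Around each $x_0\in Z(d)$ of multiplicity $k$, Hurwitz's theorem guarantees that for $m$ large $d_m$ has exactly $k$ zeros (counted with multiplicity) inside a small disc about $x_0$, and Jensen's formula applied on that disc yields a uniform local $L^1$-bound on $\log\abs{d_m}$ in terms of $\norm{D_m}_\delta$. Combining the two gives $\int\log\abs{d_m}\,\ud\mu\to\int\log\abs{d}\,\ud\mu$; the degenerate case $d\equiv 0$ is handled separately, both sides diverging to $-\infty$.

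\textbf{Continuity of $L(\beta,\cdot)$.} Upper semicontinuity is standard: subadditivity gives $L(\beta,D)=\inf_n\tfrac{1}{n}\int\log\norm{D^{(n)}}\,\ud\mu$, and each integrand depends continuously on $D$ (via uniform convergence $D_m^{(n)}\to D^{(n)}$, dominated above by $n\log\norm{D_m}_\delta$ and below via $\log\norm{M}\ge\tfrac{1}{2}\log\abs{\det M}$ together with Step (a)). Lower semicontinuity would be established by adapting the subharmonicity/avalanche-principle scheme of Bourgain--Jitomirskaya and Goldstein--Schlag: the functions $u_n(z):=\tfrac{1}{n}\log\norm{D^{(n)}(z)}$ are subharmonic on a neighborhood of $\mathbb{T}_\delta$; one chooses $\delta'<\delta$ so that $d$ (and hence $d_m$, for $m$ large, by Hurwitz) has no zeros on $\{\abs{\im z}=\delta'\}$; one then derives a large deviation estimate
\begin{equation*}
\mu\Big\{x\in\mathbb{T}:\,\big|u_n(x)-\textstyle\int u_n\,\ud\mu\big|>\epsilon\Big\}<e^{-cn}
\end{equation*}
for Diophantine $\beta$ with constants uniform in a $\norm{\cdot}_\delta$-neighborhood of $D$, and finally applies the avalanche principle to upgrade this into uniform control on $\abs{L(\beta,D_m)-L(\beta,D)}$.

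\textbf{Main obstacle.} The chief difficulty, absent from the non-singular setting, is that $u_n(x)=-\infty$ at preimages of singularities, so the usual sharp sublevel-set estimates of Cartan/BMO type driving the LDT must be replaced by variants tolerating logarithmic poles. The main tool here is the uniform $L^1$-control on the negative part of $u_n$ supplied by Step (a) through $\tfrac{1}{2n}\int\log\abs{\det D^{(n)}}\,\ud\mu$, which remains bounded locally in $\norm{\cdot}_\delta$ near any singular cocycle $D$. Pushing the LDT/avalanche machinery through with only this integral control, uniformly over a neighborhood of $D$, is what I expect to be the crux of the argument.
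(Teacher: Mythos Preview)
Your overall architecture coincides with the paper's: reduce everything to continuity of $D\mapsto\int\log|d|\,\ud\mu$ (this is exactly the paper's Lemma~\ref{eq_lemint}, proved by your Hurwitz/Jensen argument) together with continuity of $L(\beta,\cdot)$; then obtain the latter from a large deviation estimate for $u_n=\tfrac{1}{n}\log\|D^{(n)}\|$, uniform over a $\|\cdot\|_\delta$-neighborhood of $D$, after which the avalanche-principle machinery of \cite{A} finishes the job. You also correctly locate the crux: $u_n$ has logarithmic poles, so the bounded-subharmonic LDT of \cite{C} does not apply directly.

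The gap is in the tool you propose for overcoming this obstacle. Uniform $L^1$-control on $u_n^-$ (equivalently, on $\tfrac{1}{2n}\int\log|\det D^{(n)}|$) is necessary but not what drives the argument; what is actually needed is uniform \emph{measure} control on sublevel sets of the determinant. The paper's key new ingredient, absent from your sketch, is a \emph{uniform {\L}ojasiewicz inequality} (Theorem~\ref{thm_opentrans}): for $d\in\mathcal{C}_\delta^\omega(\mathbb{T},\mathbb{C})\setminus\{0\}$ there exist $\alpha>0$, $\epsilon_0>0$, and a full $\|\cdot\|_\delta$-neighborhood $U$ of $d$ such that $\mu\{|\tilde d|<\epsilon\}<\epsilon^\alpha$ for every $\tilde d\in U$ and $0<\epsilon<\epsilon_0$. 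With this in hand one replaces $u_n$ by the bounded cut-off $w_n:=\max(u_n,-A)$, applies the known LDT for bounded subharmonic functions to $w_n$, and uses the uniform {\L}ojasiewicz exponent $\alpha$ to control (i) the measure of the bad set $\{u_n\neq w_n\}$, (ii) the almost-invariance $|w_n(x)-w_n(x+\beta)|$ outside a set of exponentially small measure, and (iii) the discrepancy $|\langle w_n\rangle-L_n(\beta,\tilde D)|$. None of these three steps follows from an integral $L^1$-bound alone: each requires the sublevel-set exponent $\alpha$ to be stable under perturbation of $d$, which is precisely what ``openness of $\alpha$-transversality'' provides and what your outline does not supply.
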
 
We recall that $\beta$ is called Diophantine, if there exists $0 < b(\beta)$ and $1 < r(\beta)< + \infty$ such that for all $j \in \mathbb{Z}\setminus \{0\}$
\begin{equation} \label{eq_diophantine}
\abs{\sin(2\pi j \beta)} > \dfrac{b(\beta)}{\abs{j}^{r(\beta)}} ~\mbox{.}
\end{equation}

Besides from being a natural question to ask, our motivation for Theorem \ref{thm_cont_new} comes from the spectral theory of quasi-periodic analytic Jacobi operators on $\mathit{l}^2(\mathbb{Z})$,
\begin{eqnarray} \label{eq_hamiltonian}
& (H_{\theta;\beta} \psi)_k := v(\theta + \beta k) \psi_{k} + c(\theta + \beta k) \psi_{k+1} + \overline{c}(\theta + \beta (k-1)) \psi_{k-1} ~\mbox{.}
\end{eqnarray}
Here, $\beta$ is a fixed irrational and $v$, $c$ are $\mathcal{C}_\delta^\omega(\mathbb{T}, \mathbb{C})$ for some $\delta>0$. Moreover, $v$ is taken to be a real-valued function on $\mathbb{T}$, which makes (\ref{eq_hamiltonian}) a bounded self-adjoint operator for each $\theta \in \mathbb{T}$. An important special case of (\ref{eq_hamiltonian}) is given by $c(x) = 1$ (Schr\"odinger operators). 

Spectral analysis of (\ref{eq_hamiltonian}) amounts to the study of solutions to the finite difference equation $H_{\theta;\beta} \psi = E \psi$ over $\mathbb{C}^\mathbb{Z}$. It is well known that this problem can be tackled from a dynamical point of view, defining the transfer matrix,
\begin{equation} \label{eq_transfer}
B^E(x) := \dfrac{1}{c(x)} \begin{pmatrix} E - v(x) & -\overline{c}(x - \beta) \\ c(x) & 0 \end{pmatrix}  ~\mbox{.}
\end{equation}
We are particularly interested in operators (\ref{eq_hamiltonian}) where $c(x)$ is not bounded away from zero. In this case, the transfer matrix $B^E(x)$ is well defined except for finitely many points determined by the zeros of $c(x)$. 

The dynamical system relevant to the spectral analysis of (\ref{eq_hamiltonian}) is then given by the cocyle $(\beta, B^E)$. In addition, there is also an associated analytic cocyle $(\beta, A^E)$ given by 
\begin{equation}
A^E(x) := \begin{pmatrix} E - v(x) & -\overline{c}(x-\beta) \\ c(x) & 0 \end{pmatrix} ~\mbox{.}
\end{equation}
Obviously, for $\mu$ a.e. $x$ the two relevant cocyles  are related by $B^E(x) = \dfrac{1}{c(x)} A^E(x)$. In particular, for a given Jacobi operator, we obtain the following relation between the LE of its associated cocyles $(\beta, B^E)$ and $(\beta, A^E)$,
\begin{equation}
L(\beta, B^E) = L^\prime(\beta, A^E) ~\mbox{.}
\end{equation}

Oftentimes one studies one-paramter families of quasi-periodic Jacobi matrices. For instance, a two dimensional crystal layer subject to an external magnetic field of flux $\beta$ perpendicular to the lattice plane may be described by an operator of the form (\ref{eq_hamiltonian}) with functions $c$, $v$ given by
\begin{equation} \label{eq_hamiltonian1}
c(x) := \lambda_{3} \mathrm{e}^{-2\pi i (x+\frac{\beta}{2})} + \lambda_{2} + \lambda_{1} \mathrm{e}^{2 \pi i (x+\frac{\beta}{2})} ~\mbox{,}
~ v(x)  := 2 \cos(2 \pi x) ~\mbox{.}
\end{equation}  
Here, the parameter $\lambda = (\lambda_1, \lambda_2, \lambda_3)$ models the lattice geometry as well as the interactions between the nuclei situated at the lattice points of $\mathbb{Z}^2$. The operator associated with (\ref{eq_hamiltonian1}) is known as extended Harper's operator \cite{E,G,T,U}. We mention that a prominent special case of (\ref{eq_hamiltonian1}) arises for $\lambda_1 = \lambda_3 = 0$, the associated operator being known as almost Mathieu operator (or Harper's operator in physics literature). 

For such one-parameter families, it is a natural conjecture to expect continuity of the LE upon variation of the parameter $\lambda$. For extended Harper's model this question constitutes an important ingredient for the spectral analysis, which so far is only known for the almost Mathieu case. Theorem \ref{thm_cont_new}  answers this question from a general point of view. 
 
Continuity of the Lyapunov exponent for analytic cocycles has been the subject of earlier studies. These considerations however imposed restrictions on the determinant $d(x)$. 

In \cite{S}, H\"older continuity of the LE was established for Schr\"odinger cocyles (thus c=d=1) under a strong Diophantine condition with $\abs{j}^{r(\beta)}$ in (\ref{eq_diophantine}) replaced by $\abs{j} \log{\abs{j}}^{r(\beta)}$. 

An analogue of Theorem \ref{thm_cont_new} for $d(x)=1$ (or more generally for $d(x)$ bounded away from zero) was proven in  \cite{C}. This statement in particular implies continuous dependence of the LE on the coupling constant for the almost Mathieu operator. 

Later, in \cite{A}, $d(x)$ was allowed to vanish, however, in order to deal with these zeros, the space of interest was restricted to analytic cocycles having the {\em{same}} $d(x)$ (see Theorem 1 in \cite{A}). Applied to extended Harper's equation, the latter result already implied continuity of $L(\beta, A^E)$ in the energy, however, since variations of $\lambda$ change $\det A^E$, it does not yield continuity in the coupling \footnote{We correct a false remark in \cite{A} which asserted joint continuity of the Lyapunov exponent for extended Harper's equation based on Theorem 1 therein. In \cite{E} (based on \cite{A}), the authors, however, only used continuity with respect to $E$ which, as mentioned, does indeed follow from Theorem 1 proven in \cite{A}.}.

At this point we mention that when allowing $d(x)$ to vanish, details of the number theoretic nature of the frequency $\beta$ come into play. Whereas the earlier result in \cite{C} is valid for any irrational $\beta$, the result in \cite{A} could only be proven for Diophantine $\beta$. 

The achievement here is to deal with {\em{all}} non-trivial singular analytic cocycles $(\beta,D)$, thus removing the above mentioned constraints on $d(x)$ imposed by previous studies. Allowing for zeros in $d(x)$ however, results in signatures of the arithmetic properties of $\beta$ which manifest themselves in a Diophantine condition on $\beta$. 

Following, we employ a similar general strategy as in \cite{A} to prove Theorem \ref{thm_cont_new}. However, allowing the determinant to vary requires changes in the heart of the proof of \cite{A} where the authors provide a large deviation bound for analytic non-SL(2,$\mathbb{C}$) cocycles (Lemma 1 in \cite{A}). 

The key to Theorem \ref{thm_cont_new} is to appropriately generalize this large deviation bound to also incorporate a variation of $d(x)$. With this new, {\em{uniform}}, large deviation bound at hand, the remainder of the proof given in \cite{A} carries over more or less literally to imply Theorem \ref{thm_cont_new}. 

The paper is organized as follows. As preparation, in Sec. \ref{sec_unifloj} we establish a uniform version of the Lojasiewicz inequality, Theorem  \ref{thm_opentrans}, allowing us to deal with zeros of $d(x)$ upon continuous variation of the cocycle. This enters as crucial ingredient in the proof of the uniform large deviation bound, Theorem \ref{thm_LDB}, established in Sec. \ref{sec_unifdevbd}. Finally, we conclude with some corollaries to the main theorem in Sec. \ref{subsec_cont_rem}.

\section{Openness of $\alpha$-transversality} \label{sec_unifloj}

We start with some preparations exploring basic properties of complex analytic functions. To deal with possible zeros of $d(x)$, in \cite{A} the authors made use of the following basic fact valid for every {\em{real}} analytic function $f(x)$:
\begin{theorem}[Lojasiewicz inequality \cite{P}]
Given a real analytic function $f(x)$ on $\mathbb{T}$, there exist constants $0 < \alpha, \epsilon_0 \leq 1$ such that
\begin{equation} \label{eq_loj_orig}
\mu\{ \abs{f(x)} < \epsilon \} < \epsilon^\alpha ~\mbox{,}
\end{equation}
for every $0< \epsilon < \epsilon_0$.
\end{theorem}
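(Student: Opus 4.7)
The plan is to exploit the local structure of zeros of $f$. Since the statement must implicitly assume $f \not\equiv 0$ (otherwise the left side equals $1$ and the bound fails trivially), the analytic extension of $f$ to a complex neighborhood of $\mathbb{T}$ has only isolated zeros, and by compactness of $\mathbb{T}$ only finitely many of them, $x_1,\dots,x_N$, lie on $\mathbb{T}$, each of some finite order $n_j \geq 1$.

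The first step is to localize. Near each $x_j$, I would write $f(x) = (x - x_j)^{n_j} g_j(x)$ with $g_j$ real analytic and $g_j(x_j) \neq 0$. By continuity there exist disjoint open intervals $U_j \ni x_j$ and positive constants $c_j$ such that $\abs{g_j(x)} \geq c_j$ on $U_j$. On the compact set $K := \mathbb{T} \setminus \bigcup_j U_j$, which is disjoint from the zero set of $f$, the continuous function $\abs{f}$ attains a positive minimum $m > 0$.

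Next, the measure estimate. For $0 < \epsilon < m$ the sublevel set $\{\abs{f} < \epsilon\}$ is contained in $\bigcup_j U_j$, and on each $U_j$ the factorization yields
\begin{equation*}
\{\abs{f} < \epsilon\} \cap U_j \subseteq \{x : \abs{x - x_j} < (\epsilon/c_j)^{1/n_j}\},
\end{equation*}
a set of $\mu$-measure at most $2(\epsilon/c_j)^{1/n_j}$. Summing over $j$ and setting $n := \max_j n_j$ gives, for all $\epsilon < 1$, a bound of the form $\mu\{\abs{f} < \epsilon\} \leq C \epsilon^{1/n}$ with $C$ depending only on $f$. Picking any $0 < \alpha < 1/n$, the desired inequality $C \epsilon^{1/n} < \epsilon^\alpha$ reduces to $\epsilon^{1/n - \alpha} < 1/C$, which holds for all $\epsilon$ smaller than some $\epsilon_0 \in (0,1]$.

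I do not anticipate any real obstacle: the whole argument rests on the finiteness of real zeros of a nontrivial real analytic function on the compact torus together with the standard local monomial factorization at each such zero, and everything else is elementary bookkeeping. In particular, no arithmetic information about $\beta$ is used, and the bound is not uniform over families of $f$ --- which is precisely what the subsequent \emph{uniform} Lojasiewicz statement (Theorem \ref{thm_opentrans}) is designed to remedy.
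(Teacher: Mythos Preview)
Your argument is correct. The paper itself does not supply a proof of this classical statement; it is quoted as a known fact with a citation to \cite{P}. What the paper \emph{does} prove is the uniform version, Theorem~\ref{thm_opentrans}, and there the strategy is the natural extension of yours: Lemma~\ref{lem_locLoj} plays the role of your local monomial factorization $f(x)=(x-x_j)^{n_j}g_j(x)$, and the positive lower bound for $\abs{g_j}$ corresponds to the constant $\kappa$ there. The one genuine extra ingredient in the paper's argument is P\'olya's Theorem~\ref{thm_ploya}, which is needed only to control the possible ``collapse of zeros'' when a nearby function $g$ has several simple zeros merging into a multiple zero of $f$; for a single fixed $f$ this phenomenon does not arise, and the paper's own Remark following the proof of Theorem~\ref{thm_opentrans} explicitly notes that in that case the estimate follows directly from the local factorization, exactly as you do. So your approach and the paper's coincide in spirit, with yours being the appropriately streamlined version for the non-uniform statement.
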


Note that the exponent $\alpha$ as well as $\epsilon_0$ depend on the function $f$. For our purposes we would like to be able to choose these constants {\em{uniformly}} over functions sufficiently close in a suitable topology. 

Since in our situation the functions of interest are not only real but even complex analytic with holomorphic extensions to a neighborhood of some strip $\abs{\im{z}} \leq \delta$, for $\mathcal{C}_\delta^\omega(\mathbb{T}, \mathbb{C})$ we choose the topology induced by the norm $\vert \vert f \vert \vert_\delta := \sup_{ z \in \abs{\im{z}} \leq \delta} \abs{f(z)}$. 

More generally, if $K \subset \mathbb{C}$ compact, we equip the space of functions holomorphic on a neighborhood of $K$ with the norm $\vert \vert f \vert \vert_K := \sup_{z \in K} \abs{f(z)}$; the resulting topological space shall be denoted by $\mathfrak{A}(K)$.

We amend that in \cite{A} the authors chose a weaker topology for $\mathcal{C}_\delta^\omega(\mathbb{T}, \mathbb{C})$, induced by $\vert \vert f \vert \vert_\mathbb{T} := \sup_{ z \in \mathbb{T}} \abs{f(z)}$ resulting however in the need to fix the determinant $d(x)$ of the analytic cocycles under consideration. 

As we will show for {\em{complex}} analytic functions, (\ref{eq_loj_orig}) together with the desired uniformity of the constants will follow from basic properties of holomorphic functions.

Suggested by (\ref{eq_loj_orig}), we introduce:
\begin{definition}
Fix $\delta>0$. We say that $g \in \mathcal{C}_\delta^\omega(\mathbb{T},\mathbb{C})$ satisfies an $(\alpha,\epsilon_0)$-transversality condition if (\ref{eq_loj_orig}) holds for given exponent  $0<\alpha \leq 1$ and $\epsilon_0>0$. We denote the class of such functions in $ \mathcal{C}_\delta^\omega(\mathbb{T},\mathbb{C})$ by $\mathcal{T}_\alpha^{\epsilon_0}$.
\end{definition}
\begin{remark}
\begin{itemize}
\item[(i)] Clearly, $\mathcal{T}_\alpha^{\epsilon_0} \subseteq \mathcal{T}_\beta^{\epsilon_1}$ if $\beta < \alpha$ and $\epsilon_1 \leq \epsilon_0$.
\item[(ii)] If $f \in
  \mathcal{C}_\delta^\omega(\mathbb{T},\mathbb{C})$ has no zeros on
  $\mathbb{T}$, $f \in \mathcal{T}_\alpha^{\epsilon_0}$ for all $0 < \alpha,\leq 1$, some $\epsilon_0 (\alpha).$ 
\end{itemize}
\end{remark}

For $g \in \mathfrak{A}(K)$ not identically zero, let $\mathcal{N}(g;K)\in \mathbb{N}_0$ denote the number of zeros of $g$ on $K$ counting multiplicity. Here, $\mathbb{N}_0$ is the non-negative integers. We note the following simple fact about holomorphic functions. 
\begin{prop} \label{prop_semicont}
Let $K \subseteq \mathbb{C}$ compact with piecewise $\mathcal{C}^1$-boundary. 
\begin{itemize}
\item[(i)]  $\mathcal{N}(.;K): \mathfrak{A}(K)\setminus\{0\} \to
  \mathbb{N}_0$ is upper-semicontinuous. Moreover, if $f$ has no zeros
  on $\partial K,$ then $\mathcal{N}(g;K)$ is constant in a neighborhood of $f.$
\item[(ii)] For $j \in \mathbb{N}_0$, $g \in \mathfrak{A}(K)$ and $z \in K^\circ$ let
\begin{equation}
a_j(g,z):= \frac{1}{2\pi i} \int_{\partial K} \dfrac{g(\zeta)}{(\zeta - z)^{j+1}} \ud \zeta ~\mbox{.}\label{aj}
\end{equation}
Then, $a_j(g,z)$ is jointly continuous on $\mathfrak{A}(K) \times K^\circ$.
\end{itemize}
\end{prop}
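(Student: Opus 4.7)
The plan is to derive both items from standard tools of complex analysis: the argument principle / Rouch\'e's theorem for (i), and uniform continuity of a parameter integral for (ii).

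For (i) I first settle the more precise ``locally constant'' assertion. Given $f \in \mathfrak{A}(K)\setminus\{0\}$ with no zeros on $\partial K$, compactness gives $m := \min_{\zeta \in \partial K} \abs{f(\zeta)} > 0$. For any $g$ with $\norm{g-f}_K < m$, the strict inequality $\abs{g-f} < \abs{f}$ on $\partial K$ allows Rouch\'e's theorem to conclude $\mathcal{N}(g; K^\circ) = \mathcal{N}(f; K^\circ)$; the same inequality forces $\abs{g} > 0$ on $\partial K$, so $\mathcal{N}(g; K) = \mathcal{N}(f; K)$. The general upper-semicontinuity is then obtained by an enlargement trick: since $f \not\equiv 0$ is holomorphic in a neighborhood of $K$, its zeros there are isolated, so one may find a compact $\widetilde{K} \supset K$ with piecewise $\mathcal{C}^1$-boundary contained in that neighborhood, such that $f$ has no zeros in $\widetilde{K} \setminus K$ and none on $\partial \widetilde{K}$. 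Then $\mathcal{N}(f; \widetilde K) = \mathcal{N}(f; K)$, and the already-established locally-constant assertion applied to $\widetilde K$ gives $\mathcal{N}(g; \widetilde K) = \mathcal{N}(f; K)$ for $g$ close to $f$. The trivial monotonicity $\mathcal{N}(g; K) \leq \mathcal{N}(g; \widetilde K)$ then yields the upper-semicontinuity.

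For (ii), fix $(g_0, z_0) \in \mathfrak{A}(K) \times K^\circ$, set $r := \operatorname{dist}(z_0, \partial K) > 0$, and restrict attention to $(g, z)$ with $\norm{g - g_0}_K$ small and $\abs{z - z_0} < r/2$. On $\partial K$, the bound $\abs{\zeta - z} \geq r/2$ keeps the denominator $(\zeta - z)^{j+1}$ uniformly bounded away from zero, so the integrand is uniformly bounded. Moreover, $g(\zeta) \to g_0(\zeta)$ uniformly in $\zeta \in \partial K$ as $\norm{g - g_0}_K \to 0$ (by definition of the topology), while $(\zeta - z)^{-(j+1)} \to (\zeta - z_0)^{-(j+1)}$ uniformly in $\zeta \in \partial K$ as $z \to z_0$. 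Combined with the finite length of $\partial K$ (guaranteed by the piecewise $\mathcal{C}^1$ hypothesis), a routine add-and-subtract estimate delivers joint continuity of $a_j$.

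The only mildly delicate point is the enlargement step in (i): one must ensure that all $g$ in the relevant neighborhood of $f$ are in fact holomorphic on $\widetilde{K}$ for Rouch\'e to be applicable there. This is automatic once $\mathfrak{A}(K)$ is interpreted as functions holomorphic on a common fixed neighborhood of $K$, which is the setting of the ensuing applications (where $K = \mathbb{T}_\delta$). Everything else is direct bookkeeping with Rouch\'e's theorem and a standard parameter-integral estimate.
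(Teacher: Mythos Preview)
Your argument is correct and follows essentially the same route as the paper: for (i) both proofs use the argument principle/Rouch\'e for the boundary-zero-free case and then handle boundary zeros by passing to a slightly larger region (the paper takes a compact neighborhood $U$ of $\partial K$ and applies the zero-free case to $K\setminus U^\circ$ and $U$, which is equivalent to your enlargement $\widetilde K\supset K$), and both tacitly need nearby $g$ to be defined on that larger set---a point you rightly flag. The only cosmetic difference is in (ii): the paper observes that Cauchy's formula gives $a_j(g,z)=g^{(j)}(z)/j!$, reducing joint continuity to the standard fact that $\lVert\cdot\rVert_K$-convergence forces local uniform convergence of derivatives in $K^\circ$, whereas you bound the contour integral directly; both are equally valid.
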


\begin{proof}
Part (ii) follows trivially from 
\begin{equation}
\abs{a_j(g,z_0) - a_j(f, z_1)} = j! \abs{g^{(j)}(z_0) - f^{(j)}(z_1)} ~\mbox{.}
\end{equation}

To prove (i), let $f \in \mathfrak{A}(K)\setminus\{0\}$. It suffices to show $\mathcal{N}(g;K) \leq \mathcal{N}(f;K)$ for $g$ in a neighborhood of $f$. We distinguish the following two cases.
\begin{description}
\item[Case 1]
If $f$ has {\em{no}} zeros on $\partial K$, it is well known that 
\begin{equation} \label{eq_zerocount}
\mathcal{N}(f;K) =  \dfrac{1}{2 \pi i} \int_{\partial K} \dfrac{f^\prime(z)}{f(z)} \ud z ~\mbox{.}
\end{equation}
Using holomorphicity, if $g_\alpha \to f$ also $g^\prime_\alpha \to f^\prime$. Moreover, since $f$ has no zeros on $\partial K$, the same will eventually hold for $g_\alpha$. In particular,
the analogue of (\ref{eq_zerocount}) eventually expresses $\mathcal{N}(g_\alpha,K)$. Thus by bounded convergence we obtain, $\mathcal{N}(g_\alpha;K) = \mathcal{N}(f;K)$ eventually.

\item[Case 2]
If $f$ does have zeros on $\partial K$, there exists a compact neighborhood $U$ of $\partial K$ such that $f$ is holomorphic on a neighborhood of U and has no zeros on $U\setminus \partial K$.  Applying above considerations separately to $K \setminus U^\circ$ and $U$ we obtain $\mathcal{N}(g;K) \leq \mathcal{N}(f;K)$ for every $g$ sufficiently close to $f$.
\end{description}
\end{proof}

For later use we mention the following simple consequence, easily obtained by separating zeros by arbitrarily small closed balls.
\begin{coro} \label{coro_zeros}
Let $K \subset \mathbb{C}$ compact. For $f \in \mathfrak{A}(K)\setminus \{0\}$ let $\mathfrak{Z}(f;K)$ denote the set of zeros of $f$ on $K$. Then, $\mathfrak{Z}(.;K)$ is continuous in the Hausdorff metric.
\end{coro}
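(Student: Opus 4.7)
The plan is to combine Proposition \ref{prop_semicont}(i) with the idea suggested by the statement: isolate each zero of $f$ inside a small closed disk. Fix $f \in \mathfrak{A}(K)\setminus\{0\}$; by the identity theorem and compactness of $K$, the zero set $\mathfrak{Z}(f;K) = \{z_1,\dots,z_m\}$ is finite. Given $\varepsilon>0$, I would first choose $\varepsilon' \in (0,\varepsilon]$ so that (a) the closed balls $\overline{B_{\varepsilon'}(z_i)}$ are pairwise disjoint and contained in an open neighborhood $\Omega$ of $K$ on which $f$ is holomorphic, and (b) $f$ is nonvanishing on each circle $\partial B_{\varepsilon'}(z_i)$; both conditions are achievable because the zeros of $f$ in $\Omega$ form a discrete set.

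Each disk $\overline{B_{\varepsilon'}(z_i)}$ has a smooth boundary, so Proposition \ref{prop_semicont}(i) applies with $\overline{B_{\varepsilon'}(z_i)}$ in place of $K$ and gives that $\mathcal{N}\bigl(\,\cdot\,;\overline{B_{\varepsilon'}(z_i)}\bigr)$ is constant on some $\|\cdot\|_{\overline{B_{\varepsilon'}(z_i)}}$-neighborhood $\mathcal{U}_i$ of $f$. By restriction, if $g$ is close enough to $f$ in $\mathfrak{A}(K)$ it lies in $\bigcap_i \mathcal{U}_i$, and then $g$ has at least one zero $w_i$ in each $\overline{B_{\varepsilon'}(z_i)}$. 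In particular $\sup_i d(z_i,\mathfrak{Z}(g;K)) \leq \varepsilon'$, provided we additionally arrange (see below) that $w_i \in K$.

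For the reverse Hausdorff bound, the set $K^{*} := K \setminus \bigcup_i B_{\varepsilon'}(z_i)$ is compact and $f$ is nonvanishing on $K^{*}$, so $\kappa := \min_{z \in K^{*}} \abs{f(z)} > 0$. For any $g$ with $\|g-f\|_K < \kappa$, $g$ is also nonvanishing on $K^{*}$, hence every zero of $g$ lying in $K$ already lies in some $\overline{B_{\varepsilon'}(z_i)}$, giving $\sup_{w \in \mathfrak{Z}(g;K)} d(w,\mathfrak{Z}(f;K)) \leq \varepsilon'$. Combining the two one-sided estimates yields $d_H(\mathfrak{Z}(f;K),\mathfrak{Z}(g;K)) \leq \varepsilon' \leq \varepsilon$ for all $g$ in a suitable $\|\cdot\|_K$-neighborhood of $f$.

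The only genuinely delicate step is the one I flagged: the zeros $w_i$ produced by Proposition \ref{prop_semicont}(i) live a priori in $\overline{B_{\varepsilon'}(z_i)} \subseteq \Omega$, not automatically in $K$. To conclude $w_i \in \mathfrak{Z}(g;K)$ one shrinks $\varepsilon'$ so that $\overline{B_{\varepsilon'}(z_i)} \subseteq K$, which is possible whenever $z_i \in K^{\circ}$. In the applications relevant for Sec.~\ref{sec_unifloj}, where $K$ is a closed strip $\mathbb{T}_\delta$ and the functions under consideration are real on $\mathbb{T} \subset K^{\circ}$, the zeros of interest lie in the interior, so this issue evaporates and the argument above is complete.
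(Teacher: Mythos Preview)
Your argument is correct and matches the paper's approach exactly: the paper offers no proof beyond the single-line hint ``easily obtained by separating zeros by arbitrarily small closed balls,'' which is precisely the strategy you carry out in detail. You in fact go further than the paper by flagging the boundary-zero issue; as literally stated the corollary can fail at an $f$ with a zero on $\partial K$ (a small perturbation may push that zero just outside $K$), and the paper tacitly relies---as you observe---on the zeros of interest lying in $K^{\circ}$, which is indeed the case in its only application (the proof of Lemma~\ref{eq_lemint}, where the zeros sit on $\mathbb{T}\subset U^{\circ}$).
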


For $f \in \mathcal{C}_\delta^\omega(\mathbb{T},\mathbb{C})$ let $l(f)$ denote the maximal multiplicity of the distinct zeros of $f$ on $\mathbb{T}.$ As we shall argue, Proposition \ref{prop_semicont} implies openness of $\alpha$-transversality:
\begin{theorem} \label{thm_opentrans}
For fixed $\delta >0$,
\begin{itemize}
\item[(i)] Suppose $f \in \mathcal{C}_\delta^\omega(\mathbb{T},\mathbb{C})$ does not vanish identically but possesses zeros on $\mathbb{T}$. 
Then, $f$ satisfies an $(\alpha,\epsilon_0)$-transversality condition with $\alpha = \left(l(f)^{-1}\right)^{-}$. 
\item[(ii)] Let $f \in \mathcal{C}_\delta^\omega(\mathbb{T},\mathbb{C})$ then for any $0\leq \alpha < l(f)^{-1}$, there is $\epsilon_0(f,\alpha)$ such that
$f \in\mathrm{Int} \mathcal{T}_\alpha^{\epsilon_0}$ wrt  $\mathcal{C}_\delta^\omega(\mathbb{T},\mathbb{C})$.
\end{itemize}
\end{theorem}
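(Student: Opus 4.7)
The plan is to absorb part (i) into a uniform version of the classical Lojasiewicz argument and prove (ii) directly. Write $f \not\equiv 0$ and let $z_1,\dots,z_N \in \mathbb{T}$ be the distinct zeros of $f$ on $\mathbb{T}$, with multiplicities $m_1,\dots,m_N$, so that $l(f) = \max_j m_j$. The case in which $f$ has no zeros on $\mathbb{T}$ is trivial: $|f|$ is bounded below on $\mathbb{T}$ by continuity, the same holds for any sufficiently close $g$, and then both $f$ and nearby $g$ lie in $\mathcal{T}_\alpha^{\epsilon_0}$ for any $\alpha$ and suitable $\epsilon_0$.

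Around each $z_j$ I would pick a closed disk $\overline{D_j} \subset \mathbb{T}_\delta$ on which $f$ vanishes only at $z_j$ and is nonzero on $\partial D_j$, making the $\overline{D_j}$ pairwise disjoint. Set $\eta_0 := \min_{\mathbb{T}\setminus \bigcup_j D_j} |f| > 0$. By Proposition \ref{prop_semicont}(i), there is a neighborhood $\mathcal{V}$ of $f$ in which $\mathcal{N}(g;\overline{D_j}) = m_j$ for every $j$, and one may further shrink $\mathcal{V}$ so that $\|g-f\|_\delta < \eta_0/2$ and $|g| \geq |f|/2$ on each $\partial D_j$. For $g \in \mathcal{V}$ list the zeros of $g$ in $D_j$ as $w_{j,1}(g),\dots,w_{j,m_j}(g)$ (repeated by multiplicity) and factor $g(z) = Q_{j,g}(z)\, h_{j,g}(z)$ where $Q_{j,g}(z) := \prod_{l=1}^{m_j}(z-w_{j,l}(g))$ and $h_{j,g}$ is holomorphic and nonvanishing on $\overline{D_j}$.

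The key uniformity input is a lower bound $|h_{j,g}| \geq c > 0$ on $\overline{D_j}$ valid throughout $\mathcal{V}$. On $\partial D_j$ this follows from $|g| \geq |f|/2 \geq \eta_1 > 0$ together with the trivial upper bound $|Q_{j,g}(z)| \leq (\mathrm{diam}\,D_j)^{m_j}$, and it then propagates to the full disk by the minimum modulus principle, since $h_{j,g}$ is nonvanishing on $\overline{D_j}$. Granted this, the elementary fact that a product of $m$ nonnegative reals bounded by $\eta$ forces at least one factor to be $<\eta^{1/m}$ gives, whenever $x \in D_j \cap \mathbb{T}$ satisfies $|g(x)|<\epsilon$, that $|x-\re w_{j,l}(g)| \leq |x-w_{j,l}(g)| < (\epsilon/c)^{1/m_j}$ for some $l$. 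Taking unions over $l$, summing over $j$, and using $|g| \geq \eta_0/2$ on $\mathbb{T}\setminus\bigcup_j D_j$ (for $\epsilon < \eta_0/2$), one obtains
\[
\mu\{x \in \mathbb{T} : |g(x)| < \epsilon\} \leq C\,\epsilon^{1/l(f)}
\]
for every $g \in \mathcal{V}$ and all $\epsilon$ below a threshold depending only on $f$ and $\mathcal{V}$. Given any $\alpha < l(f)^{-1}$, shrinking $\epsilon_0$ so that $C\epsilon^{1/l(f)} < \epsilon^\alpha$ on $(0,\epsilon_0)$ yields $\mathcal{V} \subseteq \mathcal{T}_\alpha^{\epsilon_0}$, proving (ii); specializing to $g=f$ recovers (i).

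I expect the main obstacle to be precisely the uniform lower bound on $h_{j,g}$. Without Proposition \ref{prop_semicont}(i) fixing the exact zero count $m_j$ in each $\overline{D_j}$ throughout $\mathcal{V}$, nearby $g$ could have a different number of zeros in $D_j$, destabilizing both the factorization and the exponent in the final estimate. The minimum modulus step is the cleanest way to transfer a lower bound on $h_{j,g}$ from $\partial D_j$ to all of $\overline{D_j}$ without having to invoke Cauchy's formula explicitly and thereby track joint continuity of the factorization in $g$.
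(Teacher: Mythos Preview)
Your argument is correct and shares the paper's overall architecture---localize near the zeros of $f$, obtain a uniform polynomial lower bound for $|g|$ in each localized region, then control the measure of the polynomial's sublevel set---but the two key technical steps are carried out differently. For the lower bound, the paper (Lemma~\ref{lem_locLoj}) expands $g$ in a Taylor polynomial of order $l_j$ about each zero $\tilde z$ of $g$ near $x_j$ and controls the coefficients uniformly via Cauchy estimates and Proposition~\ref{prop_semicont}(ii); you instead divide out the full zero divisor of $g$ in $\overline{D_j}$ and push the boundary lower bound on the nonvanishing cofactor $h_{j,g}$ inward by the minimum modulus principle. For the measure estimate, the paper invokes P\'olya's theorem (Theorem~\ref{thm_ploya}), explicitly flagged in the subsequent remark as the device handling a possible ``collapse of zeros'' as $g\to f$; your pigeonhole observation (a product of $m_j$ factors below $\eta$ forces one factor below $\eta^{1/m_j}$) recovers the same exponent $1/l(f)$ by elementary means, with a degree-dependent constant $2m_j$ in place of P\'olya's universal $4$, which is harmless since $m_j\le l(f)$ is fixed. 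Your route is more self-contained and avoids the external input of P\'olya's inequality; the paper's route isolates that inequality as the sharp tool and keeps the constant independent of the multiplicity structure.
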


\begin{proof}
Let $f \in \mathcal{C}_\delta^\omega(\mathbb{T},\mathbb{C})\setminus\{0\}$ be fixed. Clearly, if $l(f)=0$ so is $l(g)$ for any $g \in \mathcal{C}_\delta^\omega(\mathbb{T},\mathbb{C})$ sufficiently close to $f$ in which case the theorem becomes trivial. Hence, without loss, we may assume $l(f) \geq 1$. 

Let $x_1, \dots, x_n$ be the distinct zeros of $f$ with multiplicities respectively, $l_1, \dots, l_n$. Choose $K$, a compact neighborhood of $\mathbb{T}$, such that $f(z) \neq 0$ on $K\setminus \mathbb{T}$. Separating the zeros by closed balls of some appropriate radius $r$, there exists $\eta >0$ such that $\vert \vert g -  f \vert \vert_\delta < \eta$ guarantees $\abs{g(z)} >  \frac{1}{2} \min_{K \setminus \cup_{j=1}^{n} B(x_j,r)} \abs{f(z)} =: m > 0$ on $K \setminus \cup_{j=1}^{n} B(x_j,r)$. In particular, possible zeros on $K$ of any such function $g$ will lie in $\cup_{j=1}^{n} B(x_j,r)$. Using Proposition \ref{prop_semicont}(ii), $\eta$ can be chosen small enough to also ensure $\mathcal{N}(g;K) = \mathcal{N}(f;K)$.

We claim
\begin{lemma} \label{lem_locLoj}
There exist $0< \eta^\prime <\eta, 0 < \eta^{''} < r$ and $\kappa>0$ such that uniformly over $\vert \vert g - f \vert \vert_\delta < \eta^\prime$,
\begin{equation} \label{eq_locLoj}
\abs{g(z)} \geq \kappa \abs{p_{g,\tilde{z}}(z)} ~\mbox{,} ~\abs{z - x_j}<\eta^{''} ~\mbox{,}
\end{equation}
for  every zero $\tilde{z}$ of $g$ on $K$, $\tilde{z} \in B(x_j,r)$, and some monic polynomial $p_{g, \tilde{z}}(z)$ of degree at most $l_j,$ with $p_{g,\tilde{z}}(\tilde{z})=0$. Moreover,
as $\Vert g - f \Vert_\delta \to 0$, $p_{g,\tilde{z}}(z) \to (z-x_j)^{l_j}$ uniformly on compact subsets of $\mathbb{C}$.
\end{lemma}
\begin{proof}
For $g$ with $\vert \vert g - f \vert \vert < \eta$ using holomorphicity we can write
\begin{equation} \label{eq_locLoj1}
g(z) = \left( 1 + h_{g,\tilde{z}}(z) \right) \sum_{k=l(g,\tilde{z})}^{l_j} a_k(g,\tilde{z}) (z - \tilde{z})^k ~\mbox{,}
\end{equation}
locally about a zero $\tilde{z}$ of $g$, $\tilde{z} \in B(x_j,r)$, and
some holomorphic $h_{g,\tilde{z}}$ with $h_{g,\tilde{z}}=o(1)$
uniformly as $z \to \tilde{z},$ where $a_k$ are as in (\ref{aj}). 

In fact, using Cauchy estimates and Proposition
\ref{prop_semicont}(i), there is $0 < \eta^\prime < \eta$ and $0 <
\eta^{''} < r$ such that uniformly over $\vert \vert g - f \vert
\vert_\delta < \eta^\prime$ and for every zero $\tilde{z}$ of $g$ on
$K$ with $\tilde{z} \in B(x_j, r)$, $\abs{h_{g,\tilde{z}}(z)} < 1/2$
if $\abs{z - x_j} < \eta^{''}$. By Proposition
\ref{prop_semicont}(ii) (note that all zeros of $g$ are in $K^{\circ}$),
the polynomial in (\ref{eq_locLoj1}) is uniformly close on compact subsets of $\mathbb{C}$ to $a_{l_j}(f, x_j) (z - x_j)^{l_j}$. In particular, this implies that 
$\eta^\prime$ can be chosen small enough so that $\vert a_{l_j}(g,\tilde{z}) \vert > 1/2 \min_{1 \leq j \leq n} \abs{a_{l_j}(f, x_j)} > 0,$ which yields the claim.
\end{proof}

To complete the proof of Theorem \ref{thm_opentrans} we use the following well-known theorem due to P\'olya \cite{BB}.
\begin{theorem}[P\'olya] \label{thm_ploya}
Let $p_n(z)$ be a complex monic polynomial of degree at most $n \geq 1$. Then, for $\epsilon > 0$
\begin{equation}
\mu_L \left( \{ x \in \mathbb{R}: \abs{p_n(x+iy)} \leq \epsilon \} \right) \leq 4 \epsilon^{1/n} ~\mbox{.}
\end{equation}
Here, $\mu_L$ denotes the Lebesgue mesure.
\begin{remark}
The proof in \cite{BB} actually shows that under the hypotheses of Theorem \ref{thm_ploya} one has
\begin{equation}
\mu_L \left( \{ x \in \mathbb{R}: \abs{p_n(x+iy)} \leq \epsilon \} \right) \leq 2^{2-1/n} \epsilon^{1/n} ~\mbox{.}
\end{equation}
\end{remark}
\end{theorem}

Let $\epsilon_0 : = \frac{1}{2} \min_{z \in K \setminus \cup_{j=1}^{n} B(x_j, \eta^{''})} \abs{f(z)} > 0$. If $\Vert g - f \Vert_\delta < \eta^\prime$, using Lemma \ref{lem_locLoj} we conclude for $0< \epsilon < \epsilon_0$:
\begin{equation}
\begin{split}
\left\{ x \in \mathbb{T}: \right. & \left. \abs{g(x)} < \epsilon \right\} \subseteq \bigcup_{\tilde{z} \in \mathfrak{Z}(g;K)} \left\{ x \in \mathbb{T}: \abs{p_{g,\tilde{z}}(x)} < \epsilon/\kappa \right\} ~\mbox{.}
\end{split}
\end{equation}

Applying Theorem \ref{thm_ploya} we thus obtain 
\begin{equation} \label{eq_thmunifloj_1}
\mu_L\left( \left\{ x \in \mathbb{T}: \abs{g(x)} \leq \epsilon \right\} \right) \leq 4 \mathcal{N}(f;K) \left(\frac{\epsilon}{\kappa}\right)^{1/l(f)} ~\mbox{,}
\end{equation}
for $0< \epsilon < \epsilon_0$ for all $g$ with $\Vert g - f \Vert_\delta < \eta^\prime$.

In particular, for  $0<\gamma<1/l(f)$ and $0<\alpha_\gamma=1/l(f) - \gamma$, we conclude that $g \in \mathcal{T}_{\alpha_\gamma}^{\epsilon_\gamma}$ with $\epsilon_\gamma = \min\left\{ \epsilon_0, \left( \frac{4 \mathcal{N}(f;K)}{\kappa^{1/l(f)}}\right)^{1/\gamma} \right\}$. 
\end{proof}
\begin{remark}\begin{enumerate}
\item We mention that P\'olya's Theorem \ref{thm_ploya} was used to deal with a possible ``collapse of zeros'' as $g \to f$. If we restrict to functions $g$ having the same number of {\em{distinct}} zeros as $f$ in {\em{some}} neighborhood of $\mathbb{T}$, then (\ref{eq_thmunifloj_1}) can be obtained directly from Lemma \ref{lem_locLoj} since in this case (\ref{eq_locLoj}) simplifies to
\begin{equation}
\abs{g(z)} \geq \kappa \abs{z}^{l(f)} ~\mbox{,} ~\abs{z - x_j}<\eta^{''} ~\mbox{.}
\end{equation}
\item An alternative proof can be obtained using the Cartan's estimate.
\end{enumerate}
\end{remark}

We conclude this section with the following Lemma closely related to Proposition \ref{prop_semicont}, which will come handy in the proof of the uniform large deviation bound:
\begin{lemma} \label{eq_lemint}
For $f \in \mathcal{C}_\delta^\omega(\mathbb{T},\mathbb{C}),$ not vanishing identically let
\begin{equation}
I(f) := \frac{1}{2 \pi} \int \log \abs{f(x)}\ud \mu(x) ~\mbox{.} 
\end{equation}
Then $I$ is continuous on $\mathcal{C}_\delta^\omega(\mathbb{T},\mathbb{C}) \setminus \{0\}$ w.r.t. $\vert \vert . \vert \vert_\delta$.
\end{lemma}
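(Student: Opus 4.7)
Fix $f \in \mathcal{C}_\delta^\omega(\mathbb{T},\mathbb{C}) \setminus \{0\}$. The only obstacle is the logarithmic singularity of $\log\abs{f}$ at its zeros on $\mathbb{T}$, which precludes naive dominated convergence. The trivial case $l(f) = 0$ (no zeros on $\mathbb{T}$) is handled at once: $\abs{f} \geq c > 0$ uniformly forces $\abs{g} \geq c/2$ for $g$ close enough to $f$, so $\log\abs{g} \to \log\abs{f}$ uniformly on $\mathbb{T}$. In the main case $l(f) \geq 1$, invoke Theorem \ref{thm_opentrans}(ii) with some fixed $0 < \alpha < l(f)^{-1}$ to obtain a neighborhood $U$ of $f$ and constants $\epsilon_0 \in (0,1)$, $M > 0$ such that every $g \in U$ satisfies the uniform transversality $\mu\{\abs{g}<\epsilon\} \leq \epsilon^\alpha$ for all $0 < \epsilon < \epsilon_0$, as well as $\vert\vert g \vert\vert_\delta \leq M$.

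The central estimate is a uniform control of $\log\abs{g}$ near the zero set of $g$. Writing $-\log\abs{g(x)} = \int_{\abs{g(x)}}^1 t^{-1}\,\ud t$, Fubini's theorem together with the transversality bound yields, for every $g \in U$ and $0 < \epsilon < \epsilon_0$,
\begin{align*}
\int_{\{\abs{g}<\epsilon\}} (-\log\abs{g(x)})\, \ud\mu(x) &= \int_0^\epsilon \frac{\mu\{\abs{g}<t\}}{t}\,\ud t - \mu\{\abs{g}<\epsilon\}\log\epsilon \\
&\leq \frac{\epsilon^\alpha}{\alpha} - \epsilon^\alpha\log\epsilon =: \psi(\epsilon),
\end{align*}
with $\psi(\epsilon) \to 0$ as $\epsilon \to 0^+$, uniformly in $g \in U$. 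This in particular shows that $I$ is finite throughout $U$, which also verifies the integrability remark made after Definition \ref{def_anacoc}.

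With the uniform bound in hand, continuity at $f$ follows by a standard good set/bad set split. Given $\eta > 0$, pick $\epsilon_1 \in (0, \epsilon_0)$ with $\psi(\epsilon_1) < \eta/16$, set $C := \max(\abs{\log \epsilon_1}, \log M)$, and choose $\epsilon \in (0, \epsilon_1)$ with $C\epsilon^\alpha < \eta/16$. Let $E := \{\abs{f} \geq \epsilon\} \cap \{\abs{g} \geq \epsilon\}$. On $E$, the Lipschitz estimate $\abs{\log s - \log t} \leq \abs{s-t}/\min(s,t)$ gives $\int_E \abs{\log\abs{g} - \log\abs{f}}\,\ud\mu \leq \vert\vert g - f \vert\vert_\delta / \epsilon$, which is $< \eta/2$ once $\vert\vert g - f \vert\vert_\delta < \eta\epsilon/2$. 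On $E^c \subseteq \{\abs{f}<\epsilon\} \cup \{\abs{g}<\epsilon\}$, use the triangle inequality $\abs{\log\abs{g} - \log\abs{f}} \leq \abs{\log\abs{g}} + \abs{\log\abs{f}}$ and expand into four pieces: the two ``diagonal'' ones $\int_{\{\abs{h}<\epsilon\}}\abs{\log\abs{h}}\,\ud\mu$ are each $\leq \psi(\epsilon) < \eta/16$ by the central estimate, while each of the two ``mixed'' pieces, split according to whether the function inside the $\log$ is $\geq \epsilon_1$ or not, is at most $C\epsilon^\alpha + \psi(\epsilon_1) < \eta/8$. Summing all contributions yields $\int_\mathbb{T} \abs{\log\abs{g}-\log\abs{f}}\,\ud\mu < \eta$ for $g$ in a sufficiently small neighborhood of $f$, completing the proof.
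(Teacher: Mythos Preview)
Your proof is correct but proceeds along a genuinely different line from the paper's. The paper's argument is complex-analytic: it factors out the zeros of $f$ on $\mathbb{T}$ as $f(z) = g(z)\prod_j (z - e^{2\pi i x_j})$, uses the identity $\int_{\mathbb{T}} \log\abs{1 - e^{2\pi i (x-x_j)}}\,\ud\mu = 0$ (respectively Jensen's formula) to get $I(f) = I(g)$ with $g$ zero-free, and then invokes Proposition~\ref{prop_semicont} and Corollary~\ref{coro_zeros} to show that the corresponding zero-free quotients $g_\alpha$ converge uniformly to $g$, reducing everything to the trivial case.

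Your argument is instead measure-theoretic: you pull the uniform Lojasiewicz bound directly from Theorem~\ref{thm_opentrans} and convert it, via the layer-cake formula, into a uniform-in-$g$ estimate $\int_{\{\abs{g}<\epsilon\}}\abs{\log\abs{g}}\,\ud\mu \leq \psi(\epsilon)\to 0$, after which a good-set/bad-set split finishes. There is no circularity, since Theorem~\ref{thm_opentrans} is established prior to and independently of Lemma~\ref{eq_lemint}. What your route buys is that it actually proves $L^1$-convergence $\log\abs{g}\to\log\abs{f}$ (not merely convergence of the averages), is quantitative, and would transfer verbatim to any setting where a uniform Lojasiewicz-type inequality is available; the paper's route is shorter, avoids appealing to Theorem~\ref{thm_opentrans}, and highlights the special algebraic structure of holomorphic zeros. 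One very minor remark: your monotonicity claim $\psi(\epsilon) < \psi(\epsilon_1)$ is indeed correct since $\psi'(\epsilon) = -\alpha\epsilon^{\alpha-1}\log\epsilon > 0$ on $(0,1)$, but it would do no harm to state it.
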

\begin{proof}
Fixing $f \in \mathcal{C}_\delta^\omega(\mathbb{T},\mathbb{C})$, let $x_1, \dots x_n$ denote the zeros of $f$ on $\mathbb{T}$ counting multiplicities. Fix a compact neighborhood $U$ of $\mathbb{T}$ such that $f$ extends holomorphically to a neighborhood of $U$ and $f(z) \neq 0$ for $z \in U\setminus \mathbb{T}$. Letting
\begin{equation} 
g(z)=\dfrac{f(z)}{\prod_{j=1}^{n} (z-\mathrm{e}^{2\pi i x_j})} ~\mbox{,}
\end{equation}
we obtain $I(g) = I(f)$ (which is a proof of $\log \abs{f} \in L^1(\mathbb{T})$). Here, we made use of the identity,
\begin{equation} \label{eq_ident}
\frac{1}{2 \pi} \int_{0}^{1} \log \abs{1 - \mathrm{e}^{2 \pi i x_j}} \ud x = 0 ~\mbox{.}
\end{equation}

If $f_\alpha \to f$ then eventually $f_\alpha$ will have no zeros on $\partial U$; hence
letting $w_1, \dots, w_m$, denote the zeros of $f_\alpha$ on $U^\circ$ counting mulitplicities, in analogy to $g$ we can define $g_\alpha$, dividing out these zeros.

By Proposition \ref{prop_semicont}(i), $m = n$ eventually as $f_\alpha
\to f$; hence, also making use of Corollary \ref{coro_zeros} and
Proposition \ref{prop_semicont}(ii) when treating small neighborhoods
of $x_j$, we deduce $g_\alpha \to g$ uniformly on $U$. Finally, note that by Jensen's formula and (\ref{eq_ident}), $I(g_\alpha) = I(f_\alpha)$.
\end{proof}

\section{Uniform large deviation bound} \label{sec_unifdevbd}

We first fix some notation. Given an analytic cocycle $(\beta, D(x))$ we define its iterates 
\begin{equation}
D_n(x):=D(x+(n-1)\beta) \dots D(x) ~\mbox{,} ~n \in \mathbb{N} ~\mbox{.}
\end{equation}

Moreover, for $n \in \mathbb{N}$ let
\begin{equation} \label{ln}
L_n(\beta,D):=\dfrac{1}{n} \int \log \norm{D_n(x)} \ud \mu(x) ~\mbox{,}
\end{equation}
denote the $n$th approximate of $L(\beta, D)$. 

We then claim the following uniform version of the crucial Lemma 1 in  \cite{A}:
\begin{theorem}[Uniform large deviation bound for analytic cocycles (ULDB)] \label{thm_LDB}
Fix $\beta$ Diophantine and $D(x) \in \mathcal{C}_\delta^\omega(\mathbb{T}, M_2(\mathbb{C}))$ with $d(x)$ not vanishing identically. Let $p/q$ denote an approximant of $\beta$: $\abs{\beta - \frac{p}{q}} < \frac{1}{q^2}$ with $(p,q) = 1$. 

There exist $\gamma(D)>0$ and constants $0< c, C < \infty$ such that for $0<\kappa<1, ~n > (C \kappa^{-2} q)^\eta$ with $\eta = \eta(\beta) > 1$ and for $q$ sufficiently large, {\em{uniformly over}} $\vert \vert \tilde{D} - D \vert \vert_\delta < \gamma$,
\begin{equation}
\mu \left \{\left \vert \dfrac{1}{n} \log \norm{\tilde{D}_n(x)} - L_n(\beta,\tilde{D}) \right \vert > \kappa \right \} < \mathrm{e}^{-c \kappa q} ~\mbox{.}
\end{equation}
\end{theorem}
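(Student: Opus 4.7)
The plan is to adapt the proof of Lemma 1 in \cite{A} (which established this LDB for a single cocycle) and promote it to a bound uniform over a $\|\cdot\|_\delta$-neighborhood of $D$. The replacement for the fixed-cocycle inputs in \cite{A} is Theorem \ref{thm_opentrans}, which supplies control on the singular set of $\tilde d(x):=\det \tilde D(x)$ that persists as $\tilde D$ varies near $D$. The central object is $u_n^{\tilde D}(z):=\frac{1}{n}\log\|\tilde D_n(z)\|$; since $\tilde D$ extends holomorphically to $\mathbb{T}_\delta$, $u_n^{\tilde D}$ is subharmonic there, and
\begin{equation*}
\sup_{\mathbb{T}_\delta} u_n^{\tilde D} \leq \log\|\tilde D\|_\delta \leq \log(\|D\|_\delta+\gamma)
\end{equation*}
is uniformly controlled on $\|\tilde D-D\|_\delta<\gamma$.

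The first substantive step is a uniform bound on the decay of the Fourier coefficients of $u_n^{\tilde D}$. By the Riesz representation of subharmonic functions on $\mathbb{T}_\delta$, $|\widehat{u_n^{\tilde D}}(k)|\leq C_1/|k|$ with $C_1$ proportional to $\sup_{\mathbb{T}_\delta}u_n^{\tilde D}-L_n(\beta,\tilde D)$. The upper piece is uniform by the above bound; a uniform \emph{lower} bound on $L_n(\beta,\tilde D)$ is obtained by factoring $\tilde D(x)=\sqrt{|\tilde d(x)|}\,\tilde D'(x)$, giving
\begin{equation*}
L_n(\beta,\tilde D) \geq \tfrac{1}{2}\int_{\mathbb{T}} \log|\tilde d(x)|\,\ud\mu(x) + L_n(\beta,\tilde D'),
\end{equation*}
where $L_n(\beta,\tilde D')\geq 0$ since $|\det \tilde D'|\equiv 1$ forces $\|\tilde D'_n\|\geq 1$ pointwise. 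By Theorem \ref{thm_opentrans}, after shrinking $\gamma$, every $\tilde d$ in the ball lies in a common class $\mathcal{T}_\alpha^{\epsilon_0}$; the layer-cake formula then bounds $\int \log|\tilde d|\,\ud\mu$ uniformly from below, with Lemma \ref{eq_lemint} providing the matching continuity. This delivers $|\widehat{u_n^{\tilde D}}(k)|\leq C/|k|$ uniformly in $\tilde D$.

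Next comes the Diophantine averaging. For the approximant $p/q$, the orbit mean $\frac{1}{q}\sum_{j=0}^{q-1}u_n^{\tilde D}(x+jp/q)$ annihilates all Fourier modes $k\not\equiv 0\bmod q$, leaving only $k=\ell q$ terms whose sum is bounded by $\sum_{\ell\neq 0} C/|\ell q|=O(\log q/q)$. Replacing the shifts $jp/q$ by $j\beta$ for $j<q$ costs $q|\beta-p/q|<1/q$ in displacement, which combined with a subharmonic gradient estimate on the uniformly bounded $u_n^{\tilde D}$ contributes uniformly $O(1/n)$ per term. Thus
\begin{equation*}
\Big|\frac{1}{q}\sum_{j=0}^{q-1}u_n^{\tilde D}(x+j\beta) - L_n(\beta,\tilde D)\Big|
\end{equation*}
is small, uniformly in $\tilde D$, once $n$ is large relative to $q$. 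Chebyshev together with the John--Nirenberg/sub-Gaussian mechanism from \cite{A} then upgrades the uniform BMO-type bound on $u_n^{\tilde D}$ to the exponential concentration
\begin{equation*}
\mu\{|u_n^{\tilde D}(x)-L_n(\beta,\tilde D)|>\kappa\} < \mathrm{e}^{-c\kappa q},
\end{equation*}
for $n>(C\kappa^{-2}q)^\eta$ with $\eta=\eta(\beta)>1$ and $q$ large, uniformly on $\|\tilde D-D\|_\delta<\gamma$.

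The only serious new difficulty is precisely the one Section \ref{sec_unifloj} was engineered to address: as $\tilde D\to D$ the zeros of $\tilde d$ may move and can only redistribute (upper-semicontinuously in multiplicity) without their pointwise singularities stabilizing, so a fixed-cocycle Lojasiewicz bound cannot control $\int \log|\tilde d|\,\ud\mu$ uniformly. Theorem \ref{thm_opentrans} rescues this by producing $\alpha$ and $\epsilon_0$ valid \emph{simultaneously} for every $\tilde D$ in a small $\|\cdot\|_\delta$-ball, and with it the uniform Fourier decay of $u_n^{\tilde D}$. Once this uniformity is secured, the remainder of the argument in \cite{A} transfers essentially verbatim.
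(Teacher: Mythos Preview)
Your outline has a genuine gap: you treat $u_n^{\tilde D}(x)=\tfrac{1}{n}\log\|\tilde D_n(x)\|$ as though it were a \emph{bounded} subharmonic function, and both of your key steps rely on this. You explicitly invoke ``a subharmonic gradient estimate on the uniformly bounded $u_n^{\tilde D}$'' to pass from shifts by $jp/q$ to shifts by $j\beta$, and you then appeal to the John--Nirenberg/sub-Gaussian mechanism of \cite{A}, which in that reference is stated and proved for \emph{bounded} subharmonic functions (their Lemma corresponding to the paper's Lemma \ref{lem_ldb}). But $u_n^{\tilde D}$ is only bounded \emph{above}; it dives to $-\infty$ whenever the product $\tilde D_n(x)$ degenerates, i.e.\ near orbit-translates of zeros of $\tilde d$. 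Neither a uniform gradient bound nor the BMO-type concentration estimate is available for such functions without further work, so ``the remainder of the argument in \cite{A} transfers essentially verbatim'' is not justified.

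This is exactly the obstacle the paper's proof is built around. The paper introduces the cut-off $\tilde w_n:=\max\{\tilde u_n,-A\}$ to obtain a genuinely bounded subharmonic family, applies the bounded-subharmonic LDB (Lemma \ref{lem_ldb}) to $\tilde w_n$, and then spends most of the effort controlling the two error terms this creates: an almost-invariance estimate $|\tilde w_n(x)-\tilde w_n(x+\beta)|$ (Proposition \ref{prop_udb1}, using Theorem \ref{thm_opentrans} to bound the measure of the bad set where $|\tilde d|$ is small), and the discrepancy $|\langle\tilde w_n\rangle-L_n(\beta,\tilde D)|$, which requires a quantitative Diophantine Birkhoff-sum estimate for $\log|\tilde d|$ (Proposition \ref{pro_ldb2}) together with an integral bound on the cut-off set (Lemma \ref{lem_ldb4}). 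Your use of Theorem \ref{thm_opentrans} to get a uniform lower bound on $L_n(\beta,\tilde D)$ is correct and mirrors the paper's choice of $A$, but by itself it does not make $u_n^{\tilde D}$ bounded below pointwise, and so does not license the two steps you lean on.
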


\begin{proof}
For $\vert \vert \tilde{D}-D \vert \vert_\delta < \gamma$ we set $\tilde{u}_n = \tilde{u}_n(\beta,\tilde{D};x):=\frac{1}{n}\log \norm{\tilde{D}_n(x)}$, $n \in \mathbb{N}$. By hypotheses, $\tilde{u}_n$ extends to a subharmonic function on the $\delta$-strip about $\mathbb{T}$. Notice that due to possible zeros of $\tilde{d}(x)$, in general, $\tilde{u}_n$ will not be bounded.

We shall deal with the unboundedness of $\tilde{u}_n$, introducing appropriate cut-offs. To this end choose $0<A<\infty$ such that
$\inf_{\vert \vert \tilde{D} - D \vert \vert_\delta < \gamma}
I(\tilde{d}) > -2 A$ (which is finite by a compactness
argument). Here, $I(.)$ is defined as in Lemma \ref{eq_lemint}. For $n
\in \mathbb{N}$, let $\tilde{w}_n(z):=\max\{\tilde{u}_n(z), -A\}$; thereby we obtain a family $\{\tilde{w}_n, n \in \mathbb{N}\}$ of subharmonic functions on the $\delta$-strip of $\mathbb{T}$, uniformly bounded in $n$ and over $\vert \vert \tilde{D} -D\vert \vert_\delta < \gamma$.

The strategy to prove the ULDB is to estimate deviations of the individual terms in
\begin{eqnarray} \label{eq_strategy}
\vert \dfrac{1}{n} \log \norm{\tilde{D}_n(x)} - L_n(\beta,\tilde{D})  \vert <   \vert \tilde{u}_n(x) - \tilde{w}_n(x)  \vert +   \nonumber \\ 
  \vert \tilde{w}_n(x) - \langle \tilde{w}_n \rangle  \vert +  \vert \langle \tilde{w}_n \rangle - L_n(\beta,\tilde{D})  \vert ~\mbox{.}
\end{eqnarray}
Here, and following we use the notation $\langle f \rangle$ to denote the 0th Fourier coefficient of a function $f \in L^1(\mathbb{T})$.

We start by estimating the 2nd contribution in (\ref{eq_strategy}). For any $R>0$ we can write
\begin{eqnarray} \label{eq_ldb1}
\vert \tilde{w}_n(x) - \langle \tilde{w}_n \rangle  \vert \leq \left \vert \tilde{w}_n(x) - \sum_{\abs{j} < R} \dfrac{R- \abs{j}}{R^2} \tilde{w}_n(x+ j \beta)           \right \vert  + \nonumber \\
\left \vert \sum_{\abs{j} < R} \dfrac{R- \abs{j}}{R^2} \tilde{w}_n(x+ j \beta) - \langle \tilde{w}_n \rangle   \right \vert =: \abs{(\mathrm{I})} + \abs{(\mathrm{II})} ~\mbox{.}
\end{eqnarray}
$R$ will be suitably chosen later.

Contribution (II) is readily controlled using the following result  \cite{C},
\begin{lemma}[Large deviation bound for bounded subharmonic functions
  \cite{C}; see also\cite{A}, p. 1888] \label{lem_ldb}
Let $v(x)$ be a bounded 1-periodic subharmonic function defined on a neighborhood of $\mathbb{R}$. Let $\abs{\beta - \frac{p}{q}} < \frac{1}{q^2}$, $(p,q)=1$ and $0 < \kappa < 1$. Then for appropriate $0< C_1, c_1 < \infty$ and for $R> C_1 \kappa^{-1} q$ we have,
\begin{equation}
\mu \left \{\left \vert \sum_{\abs{j} < R} \dfrac{R- \abs{j}}{R^2} v(x+ j \beta) - \langle v \rangle   \right \vert > \kappa\right\} < \mathrm{e}^{-c_1 \kappa q} ~\mbox{.}
\end{equation}
\end{lemma}
\begin{remark}
For a uniformly bounded family of subharmonic functions, the constants
$c_1, C_1$ can be chosen uniformly over this family \cite{A}. This in particular, applies to the family $\{\tilde{w}_n, n \in \mathbb{N} ~\mbox{and} ~\vert \vert \tilde{D} - D \vert \vert_\delta < \gamma\}$.
\end{remark}

To estimate contribution (I) in (\ref{eq_ldb1}), we establish the following
\begin{prop} \label{prop_udb1}
Uniformly over $\vert \vert \tilde{D} - D\vert \vert_\delta < \gamma$, there exists $0<c_2$ such that for any $0<\epsilon<1$ there is $0<C_2<\infty$ with
\begin{equation}
\mu \left\{\abs{ \tilde{w}_n(x) - \tilde{w}_n(x+\beta)} > \dfrac{C_2}{n^{1-\epsilon}} \right\} < \mathrm{e}^{-c_2 n^\epsilon} ~\mbox{,}
\end{equation}
for sufficiently large $n$ (only depending on $d(x)$ and $\epsilon$).
\end{prop}

\begin{proof}
Let $B:=\gamma + \vert \vert D \vert \vert_\delta $ whence $\sup_{\vert \vert \tilde{D} - D \vert \vert_\delta < \gamma} \vert \vert \tilde{D} \vert \vert_\delta < B$. By definition of $\tilde{w}_n$ we have
\begin{equation}
\abs{\tilde{w}_n(x) - \tilde{w}_n(x+\beta)} < \frac{1}{n} \left \vert \log \dfrac{ \norm{\tilde{D}_n(x)}}{\norm{\tilde{D}_n(x+\beta)}} \right \vert ~\mbox{.}
\end{equation}
For any $M \in GL_2(\mathbb{C})$, 
\begin{equation}
\norm{M^{-1}} = \dfrac{\norm{M}}{\abs{\det M}} ~\mbox{,}
\end{equation}
whence
\begin{equation} \label{eq_ldb2}
\max \left\{ \dfrac{\norm{\tilde{D}_n(x)}}{\norm{\tilde{D}_n(x+\beta)}}, \dfrac{\norm{\tilde{D}_n(x+\beta)}}{\norm{\tilde{D}_n(x)}} \right\} < B^2 \max\left\{\frac{1}{\abs{\tilde{d}(x + n\beta)}}, \frac{1}{\abs{\tilde{d}(x)}}\right\} ~\mbox{.}
\end{equation}

Let $0<\epsilon<1$. 
If $\abs{\tilde{d}(x + j \beta)} \geq \mathrm{e}^{-n^\epsilon}$ for {\em{both}} $j=0,n$ using (\ref{eq_ldb2}) we obtain
\begin{equation}
\abs{\tilde{w}_n(x) - \tilde{w}_n(x+\beta)} < \dfrac{2 \log{B}}{n} + n^{\epsilon-1} < \dfrac{C_2}{n^{1-\epsilon}} ~\mbox{.}
\end{equation}

Hence, using Theorem \ref{thm_opentrans} we estimate
\begin{eqnarray}
\mu \left\{\abs{ \tilde{w}_n(x) - \tilde{w}_n(x+\beta)} > \dfrac{C_2}{n^{1-\epsilon}} \right\} \leq \nonumber \\
\mu \left\{\abs{\tilde{d}(x+j \beta)} < \mathrm{e}^{-n^\epsilon} ~\mbox{, some} ~j \in \{0,n\} \right\} \leq 2 \mathrm{e}^{- \alpha n^\epsilon} ~\mbox{,}
\end{eqnarray}
for $n$ sufficiently large uniformly over $\vert \vert \tilde{D} - D \vert \vert_\delta < \gamma$. Here and in the following, $\alpha$ is the exponent for $d(x)$ determined by Theorem \ref{thm_opentrans} by the maximal multiplicity of the zeros of $d(x)$ on $\mathbb{T}$. Finally choosing $c_2 < \alpha$ we obtain the claim of the Proposition.
\end{proof}

We are now ready to estimate $\mu \{x: \abs{\tilde{w}_n(x) - \langle \tilde{w} \rangle} > \kappa\}$: Let $\mathfrak{X}:=\{x: \abs{\tilde{w}_n(x) - \tilde{w}_n(x+\beta)} > \frac{C_2}{n^{1-\epsilon}}\}$, where $C_2, \epsilon$ are as in Proposition \ref{prop_udb1}. Denote by $T$ the rotation by $\beta$ on $\mathbb{T}$.

If $x \in \mathbb{T}$ is such that $\cup_{j=-R+1}^{R+1} T^j x \subseteq \mathbb{T}\setminus \mathfrak{X}$, then referring to 
(\ref{eq_ldb1}) we obtain
\begin{equation} 
\abs{(I)} = \abs{\tilde{w}_n - \sum_{\abs{j} < R} \dfrac{R-\abs{j}}{R^2} \tilde{w}_n(x+j \beta)} < \dfrac{C_2}{n^{1-\epsilon}}R ~\mbox{.}
\end{equation}
In particular, choosing $R < \frac{\kappa n^{1-\epsilon}}{2 C_2}$ implies that for such $x$ we have $\abs{(\mathrm{I})} < \kappa/2$.

The largeness condition on $R$ from Lemma \ref{lem_ldb} will also be taken care of when letting $C_1 \kappa^{-1} q < R < \frac{\kappa n^{1-\epsilon}}{2 C_2}$; this is accommodated choosing  $n > N$ with 
\begin{equation}
N:= \left( 2 C_1 C_2 \kappa^{-2} q\right)^{\frac{1}{1-\epsilon}} ~\mbox{.}
\end{equation}

Thus fixing
\begin{equation}
R := \frac{1}{2} \left( \frac{\kappa N^{1-\epsilon}}{2 C_2} + C_1 \kappa^{-1} q \right) ~\mbox{,}
\end{equation}
and using Lemma \ref{lem_ldb} and Proposition \ref{prop_udb1}, we have for $n > N$
\begin{eqnarray} \label{eq_ldb14}
\mu \{x: \abs{\tilde{w}_n(x) - \langle \tilde{w}_n \rangle} > \kappa\} \leq \mu_L\{x: \abs{(\mathrm{I})} > \kappa/2, \cup_{j=-R+1}^{R} T^{j} x \subseteq \mathbb{T}\setminus \mathfrak{X} \} + \nonumber \\
 \mu \{x: \abs{(\mathrm{I})} > \kappa/2, T^{j} x \in \mathfrak{X} ~\mbox{some} ~-R+1 \leq j \leq R \} + \mu_L\{x:\abs{(\mathrm{II})} > \kappa/2 \} \leq \nonumber \\
\mathrm{e}^{-c_1 \frac{\kappa}{2} q} + 2 R \mathrm{e}^{-c_2 \frac{\kappa}{2} q} \leq \mathrm{e}^{-c_3 \kappa q} ~\mbox{,}
\end{eqnarray}
for suitable $c_3 > 0 $. This completes the estimate of the 2nd contribution in (\ref{eq_strategy}).

Consider now the third term in in (\ref{eq_strategy}). Set
\begin{eqnarray} \label{eq_ldb9}
\tilde{\mathfrak{Y}}_n:=\{x: \tilde{w}_n(x) \neq \tilde{u}_n(x)\} = \{x: \norm{\tilde{D}_n(x)} < \mathrm{e}^{-n A}\} \\
\subseteq  \{x: (\prod_{j=0}^{n-1} \abs{\tilde{d}(x+j \beta)})^\frac{1}{2} < \mathrm{e}^{-n A}\} ~\mbox{.} \label{eq_ldb3}
\end{eqnarray}

In order to analyze the product of analytic functions occurring in (\ref{eq_ldb3}) we establish the following:
\begin{prop} \label{pro_ldb2}
Let $f \in \mathcal{C}_\delta^\omega$ not vanishing identically and let $\beta$ be fixed satisfying the Diophantine condition (\ref{eq_diophantine}). Then, there exist $0 < C_3 = C_3(\beta)$ and $0 < C_4 = C_4(f)$ such that for $n \in \mathbb{N}$ we have
\begin{equation}
\abs{\frac{1}{n} \sum_{j=1}^{n} \log \abs{f(x + j \beta)} - \langle \log \abs{f} \rangle} \leq C_3 \mathcal{N}(f;\mathbb{T}) n^{-1/r(\beta)} \log^2(n) \left\vert\min_{1\leq j \leq n} \log \abs{f(x + j \beta)}\right\vert + \dfrac{C_4}{n}   ~\mbox{.}
\end{equation}
The constant $C_4(\tilde f)$ can be chosen uniformly over $\vert \vert \tilde{f} - f\vert \vert_\delta < \epsilon$ for $\epsilon>0$ sufficiently small.
\end{prop}
\begin{remark} \label{rem_pro_ldb2}
\begin{itemize}
\item[(i)] It is through this Proposition that a Diophantine condition is imposed on $\beta$  in Theorem \ref{thm_LDB}.
\item[(ii)] Using upper-semicontinuity of $\mathcal{N}(.;\mathbb{T})$ established in Proposition \ref{prop_semicont}, \newline $\mathcal{N}(\tilde f;\mathbb{T})$ can be chosen uniformly over $\vert \vert \tilde{f} - f\vert \vert_\delta < \epsilon$ for sufficiently small $\epsilon>0$.
\end{itemize}
\end{remark}
We mention that Proposition \ref{pro_ldb2} improves on and provides a
uniform version of Proposition C from \cite{A}, originally proven in \cite{Q}. The statement given here was inspired by estimates on trigonometric products which played an important role in \cite{R} (see Sec. 9.2 therein). 

\begin{proof}
First, write $f$  as 
\begin{equation} \label{eq_ldb7}
f(z) = g(z) \prod_{j=1}^{\mathcal{N}(f;\mathbb{T})} (\mathrm{e}^{2 \pi i x_j} - z)  
\end{equation}
on a compact neighborhood $U$ where $f$ is holomorphic and exhibits its {\em{only}} zeros $\{x_j, 1 \leq j \leq \mathcal{N}(f;\mathbb{T})\}$ on $\mathbb{T}$ (counting multiplicities). In particular $g$ is holomorphic on a neighborhood of $U$ and $\mathcal{N}(g;U) = 0$.

We first establish the zero free version of Proposition \ref{pro_ldb2}.
\begin{lemma}[Zero-free version of Proposition \ref{pro_ldb2}] \label{lem_pro_ldb2}
Let $\beta \in [0,1)$ be a fixed Diophantine number satisfying condition (\ref{eq_diophantine}) and $g$ a zero free function on a compact neighborhood $U$ of $\mathbb{T}$. Then for $n \in \mathbb{N}$,
\begin{equation} \label{eq_ldb8}
\vert \frac{1}{n} \sum_{j=1}^{n} \log \abs{g(x+j \beta)} - \langle \log \abs{g} \rangle \vert \leq \frac{C_4}{n} ~\mbox{.}
\end{equation}
Here, $C_4 = C_4(g)$ can be chosen uniformly over $\vert \vert \tilde{g} - g \vert \vert_U < \epsilon$ for $\epsilon>0$ sufficiently small (only depending on $\min_{z \in U} \abs{g(z)}$).
\end{lemma}
\begin{remark}
\begin{itemize}
\item[(i)] The main purpose here is to convince the reader that $C_4$ can be chosen {\em{uniformly}}; the mere rate of convergence for the zero free situation is a standard fact from harmonic analysis.
\item[(ii)] Uniformity of $C_4(\tilde f)$ follows from Lemma \ref{lem_pro_ldb2} since $f_\alpha \to f$ uniformly on $U$ implies uniform convergence of the respective zero-free functions (for a simple argument see the proof of Lemma  \ref{eq_lemint}).
\end{itemize}
\end{remark}

\begin{proof}
Find $\epsilon_0>0$ such that $\abs{\tilde{g}(z)} > 1/2 \min_{z \in U} {\abs{g(z)}}>0$ for $\vert \vert \tilde{g} -g \vert \vert_U < \epsilon_0$. In particular, $\vert \vert \tilde{g}-g \vert \vert_U < \epsilon_0$ implies $\mathcal{N}(\tilde{g};U) = \mathcal{N}(g;U) = 0$ whence letting $\tilde{G}:=\log \abs{\tilde{g}}$ we obtain that $\tilde{G}$ is harmonic on a neighborhood of $U$. Hence, $\tilde{G} = \sum_{k \in \mathbb{Z}} \tilde{G}_k \mathrm{e}^{2 \pi i k x}$ converges absolutely and uniformly on $\mathbb{T}$ and for $n \in \mathbb{N}$
\begin{equation}
\frac{1}{n} \sum_{j=0}^{n-1} \tilde{G}(x + j \beta) - \tilde{G}_0 = \frac{1}{n} \sum_{k \in \mathbb{Z}\setminus \{0\}} \tilde{G}_k \mathrm{e}^{2 \pi i k x} \dfrac{1 - \mathrm{e}^{2 \pi i k n \beta}}{1- \mathrm{e}^{2 \pi i k \beta}} ~\mbox{.}
\end{equation}

Making use of  Eq. (\ref{eq_diophantine}), results in 
\begin{equation} \label{eq_estim_ergodic}
\left \vert \frac{1}{n} \sum_{j=0}^{n-1} \tilde{G}(x + j \beta) - \tilde{G}_0 \right \vert \leq \dfrac{2 b(\beta)}{n} \sum_{k \in \mathbb{Z}} \abs{\tilde{G}_k} k^{M} ~\mbox{,}
\end{equation}
where $M=M(\beta):= \lceil r(\beta) \rceil$. 

As in Katznelson \cite{Y}, let $A(\mathbb{T}) \subset L^1(\mathbb{T})$ denote the class of 1-periodic functions with absolutely converging Fourier series equipped with
the norm $\lVert f \rVert_{A(\mathbb{T})}:= \sum_{k \in \mathbb{Z}} \abs{f_k}$. $A(\mathbb{T})$ is a homogeneous Banach space of $L^1(\mathbb{T})$ isomorphic to $\mathit{l}^1(\mathbb{Z})$.

We employ the following standard fact :
\begin{prop}\cite{Y} \label{prop_katz}
Let $f \in L^1(\mathbb{T})$ be absolutely continuous with $f^\prime \in L^2(\mathbb{T})$. Then $f \in A(\mathbb{T})$ and 
\begin{equation}
\lVert f \rVert_{A(\mathbb{T})} \leq \lVert f \rVert_{L^1(\mathbb{T})} + \left( 2 \sum_{n=1}^{\infty} \frac{1}{n^2} \right)^{1/2} \lVert f^\prime \rVert_{L^2(\mathbb{T})} ~\mbox{.}
\end{equation}
\end{prop}

In summary, Proposition \ref{prop_katz} and (\ref{eq_estim_ergodic}) imply
\begin{equation}
\left \vert \frac{1}{n} \sum_{j=0}^{n-1} \tilde{G}(x + j \beta) - \tilde{G}_0 \right \vert \leq \dfrac{4 b(\beta)}{n}  \left( \lVert \tilde{G}^{(M)}  \rVert_\mathbb{T} + \lVert \tilde{G}^{(M+1)}  \rVert_{\mathbb{T}}  \right) ~\mbox{.}
\end{equation}
Finally, it is a basic property of harmonic functions that $\tilde{G} \to G$ uniformly on $U$ implies $\tilde{G}^{(m)} \to G^{(m)}$ for any $m \in \mathbb{N}$ \cite{Z}, which yields the claim.
\end{proof}

Thus we are left with analyzing the rate of convergence of the terms $\log\abs{\mathrm{e}^{2 \pi i x_j} - z}$ in a Caesaro mean.
We employ the following Lemma proven in \cite{R}
\begin{lemma} \label{lem_ldb3}
Let $\beta$ be irrational. For $n \in \mathbb{N}$, let $p_n/q_n$ denote the $n$th approximant of $\beta$. Then, if $1 \leq k_0 \leq q_n$ is determined by
\begin{equation}
\left \vert \sin \left( \dfrac{2 \pi (x-x_0) + k_0 \beta }{2} \right) \right \vert := \min_{1 \leq k \leq q_n} \left \vert \sin \left( \dfrac{2 \pi (x-x_0) + k \beta }{2} \right) \right \vert ~\mbox{,}
\end{equation}
we have:
\begin{eqnarray}
\left \vert  \sum_{\stackrel{k=1}{k \neq k_0}}^{q_n} \log \left \vert  \sin \left( \dfrac{2 \pi (x-x_0) + k \beta }{2} \right) \right \vert +  (q_n -1) \log(2)   \right \vert  = \\
\left \vert  \sum_{\stackrel{k=1}{k \neq k_0}}^{q_n} \log \left \vert  \mathrm{e}^{2 \pi i (x + k \beta)} - \mathrm{e}^{2 \pi i x_0}  \right \vert   \right \vert             
 < C_5 \log(q_n) ~\mbox{.}
\end{eqnarray}
Here, $C_5 = C_5(\beta)$.
\end{lemma}

Fix $1 \leq j \leq \mathcal{N}(f;\mathbb{T})$. For $n \in \mathbb{N}$
arbitrary let $s=s(n)\geq0$ such that $q_s \leq n < q_{s+1}$. By
successive division represent $n$ as $n=\sum_{k=0}^{s} l_k
q_k$. Recall that by (\ref{eq_diophantine}) \cite{S},
\begin{equation}
q_{k+1} \leq \frac{2 \pi}{b(\beta)} q_k^{r(\beta)} ~\mbox{,} ~k \in \mathbb{N}_0 ~\mbox{.}
\end{equation}
In particular, this allows to control the divisors 
\begin{equation} \label{eq_ldb5}
l_k \leq \frac{q_{k+1}}{q_k} \leq \left(\frac{2 \pi}{b}\right)^{1/r} q_{k+1} ~\mbox{,} ~k<s ~\mbox{, and}  ~l_s \leq \frac{n}{q_s} \leq \left(\frac{2 \pi}{b}\right)^{1/r} n^{1-1/r} ~\mbox{.}
\end{equation}
Let $1 \leq k_0 \leq n$ such that 
\begin{equation}
\left \vert \sin \left( \dfrac{2 \pi (x-x_j) + k_0 \beta }{2} \right) \right \vert := \min_{1 \leq k \leq q_n} \left \vert \sin \left( \dfrac{2 \pi (x-x_j) + k \beta }{2} \right) \right \vert ~\mbox{.}
\end{equation}
Making use of Lemma \ref{lem_ldb3} and (\ref{eq_ldb5}) yields
\begin{eqnarray}
\left \vert \sum_{\stackrel{k=1}{k \neq k_0}}^{n} \log \left \vert   \mathrm{e}^{2 \pi i (x+k\beta)} - \mathrm{e}^{2 \pi i x_j}  \right \vert   \right \vert \leq C_5 \sum_{k=0}^{s} \log{q_k} \frac{q_{k+1}}{q_k} \leq \nonumber \\
C_5 \log q_s \left( \left(\frac{2 \pi}{b}\right)^{1/r} q_s^{1-1/r} \frac{2}{\log{2}} \log q_s  + \log q_s  \left(\frac{2 \pi}{b}\right)^{1/r} n^{1-1/r} \right) \leq\nonumber \\
C_6(\beta) \log^2(n) n^{1-1/r} ~\mbox{.} \label{eq_ldb6}
\end{eqnarray}
Here, we also used a general fact that allows to control $s$ by $s(n) \leq \frac{2 \log q_s}{\log 2}$ (valid for any irrational $\beta$) \cite{S}.

Finally, combining (\ref{eq_ldb7}), (\ref{eq_ldb8}), and (\ref{eq_ldb6}) implies the claim of Proposition \ref{pro_ldb2}.
\end{proof}

We can now estimate $\mu\{ \tilde{\mathfrak{Y}}_n \}$ (see (\ref{eq_ldb9})). To this end, suppose $x \in \tilde{\mathfrak{Y}}_n,$ then employing Proposition \ref{pro_ldb2} results in
\begin{eqnarray}
\min_{0 \leq j \leq n-1} \abs{\tilde{d}(x+j \beta)} <
\mathrm{e}^{-n^\epsilon} 
\end{eqnarray}
for $n$ sufficiently large
and  any $0 < \epsilon < r(\beta)^{-1} \leq 1$. Hence, by theorem \ref{thm_opentrans}
\begin{equation} \label{eq_ldb11}
\mu \{\tilde{\mathfrak{Y}}_n\} \leq \mu \{x: \min_{0 \leq j \leq n-1} \abs{\tilde{d}(x+j \beta)} < \mathrm{e}^{-n^\epsilon} \} \leq n \mathrm{e}^{-n^\epsilon \alpha} \leq \mathrm{e}^{-n^\epsilon c_2} ~\mbox{,}
\end{equation}
for $\alpha$ determined by $d,$ $\gamma=\gamma(d)$ sufficiently small
and $n$ sufficiently large, uniformly over $\vert \vert \tilde{D} - D \vert \vert_\delta < \gamma.$

Since $\norm{M}^2 \geq \abs{\det{M}}$ for $M \in M_2(\mathbb{C})$, 
\begin{equation} \label{eq_ldb10}
\vert \langle \tilde{w}_n \rangle - L_n(\beta, \tilde{D}) \vert \leq \frac{1}{n} \int_{\tilde{\mathfrak{Y}}_n} \log  \left \vert \dfrac{\mathrm{e}^{-n A}}{\norm{\tilde{D}_n(x)}} \right \vert \ud x \leq  \frac{1}{n} \int_{\tilde{\mathfrak{Y}}_n} \log  \left( \dfrac{\mathrm{e}^{-n A}}{\prod_{j=0}^{n-1} \abs{\tilde{d}(x+j \beta)}^\frac{1}{2}    } \right) \ud x ~\mbox{.}
\end{equation}

\begin{lemma} \label{lem_ldb4}
Uniformly over $\vert \vert \tilde{D}-D \vert \vert_\delta <\gamma$
there exists $\alpha=\alpha(d)$ and $0<C_7=C_7(\alpha)<\infty$ such that for $i,j \in \{0, \dots, n-1\}$,
\begin{equation}
\left \vert \int_{\abs{\tilde{d}_i(x)} < \epsilon}  \log \abs{\tilde{d}_j(x)} \ud x \right \vert \leq C_7 \epsilon^\alpha \abs{\log \epsilon} ~\mbox{,}
\end{equation}
for sufficiently small $\epsilon > 0$.
\end{lemma}
\begin{proof}
Take $\alpha,\epsilon_0$ as in Theorem \ref{thm_opentrans} with $f=d.$ Take $\gamma (d)$ such that the $\gamma (d)$-neighborhood of $d$ is contained in
$\mbox{Int}\,\mathcal{T}_\alpha^{\epsilon_0}$. Set $d_i(x)=d(x+i\beta)$ and define $\tilde{A}_i:=\{x: \abs{\tilde{d}_i(x)} < \epsilon \}$ and $\tilde{B}_k^j:=\{x: \frac{1}{2^{k}} \epsilon < \abs{\tilde{d}_j(x)} < \frac{1}{2^{k-1}} \epsilon \}$ for $k \in \mathbb{N}$. 

Then, for $\gamma<\gamma(d)$ sufficiently small and $\epsilon$ sufficiently small determined uniformly for $\vert \vert \tilde{D}-D \vert \vert_\delta <\gamma,$ by Theorem \ref{thm_opentrans}, 
\begin{eqnarray}
\left \vert \int_{\abs{\tilde{d}_i(x)} < \epsilon}  \log \abs{\tilde{d}_j(x)} \ud x \right \vert \leq \sum_{k \in \mathbb{N}} \left \vert \int_{\tilde{A}_i \cap \tilde{B}_k^j} \log \abs{\tilde{d}_j(x)} \ud x  \right \vert +  \nonumber \\
 \left \vert \int_{\tilde{A}_i \setminus \cup_{k \in \mathbb{N}} \tilde{B}_k^j} \log \abs{\tilde{d}_j(x)} \ud x \right \vert \leq C_7 \epsilon^\alpha \abs{\log{\epsilon}} ~\mbox{.}
\end{eqnarray}
\end{proof}

Fix $\epsilon<r(\beta)^{-1}.$ Equation (\ref{eq_ldb10}) together with
Lemma \ref{lem_ldb4} , (\ref{eq_ldb9}) and (\ref{eq_ldb11}) imply, for
uniformly large $n$:
\begin{eqnarray} 
\vert \langle \tilde{w}_n \rangle - L_n(\beta, \tilde{D}) \vert \leq \frac{1}{n} \sum_{i=0}^{n-1} \int_{ \abs{\tilde{d}_i(x)} < \mathrm{e}^{-n^\epsilon}} 
 \log  \left( \dfrac{\mathrm{e}^{-n A}}{\prod_{j=0}^{n-1} \abs{\tilde{d}(x+j \beta)}^\frac{1}{2}    } \right) \ud x \leq \nonumber \\
 A n \mathrm{e}^{-\alpha n^\epsilon} + \frac{1}{2} C_7 n^\epsilon \mathrm{e}^{-\alpha n^\epsilon} < \mathrm{e}^{-c_4 n^\epsilon} ~\mbox{,} ~0<c_4<\alpha ~\mbox{.}
\end{eqnarray}

Thus,
\begin{eqnarray} \label{eq_ldb12}
\mu \{x \in \mathbb{T}: \abs{\tilde{u}_n(x) - L_n(\beta, \tilde{D})} > \kappa \} \leq \mu_L\{x: \abs{\tilde{u}_n(x) - \tilde{w}_n(x)} > \frac{\kappa}{2} - \frac{\mathrm{e}^{-c_4 n^\epsilon}}{2}  \} + \nonumber \\
\mu \{x: \abs{\tilde{w}_n(x) - \langle \tilde{w} \rangle_n(x)} > \frac{\kappa}{2} - \frac{\mathrm{e}^{-c_4 n^\epsilon}}{2}  \} \leq \mu_L\{\tilde{\mathfrak{Y}}_n\} + \mu_L\{x: \abs{\tilde{w}_n(x) - \langle \tilde{w}_n \rangle} > \kappa/3\} \nonumber \\
\end{eqnarray}
for $n$ sufficiently large.

Finally, making use of (\ref{eq_ldb14}) and (\ref{eq_ldb11})  we obtain Theorem \ref{thm_LDB}.
\end{proof}

\section{Concluding remarks} \label{subsec_cont_rem}

In \cite{A} the authors also obtain a continuity statement in the frequency (Theorem 2, \cite{A}). Repeating their proof we obtain an analogous statement here:
To this end we denote the set of numbers satisfying the Diophantine condition (\ref{eq_diophantine}) for a given $r>1$ by $DC(r)$.
\begin{theorem} \label{thm_cont_2}
Both $L^\prime(. , .), L(. , .): DC(r)  \times  \mathcal{C}_\delta^\omega(\mathbb{T},M_2(\mathbb{C})) \to \mathbb{R}$ are jointly continuous.
\end{theorem}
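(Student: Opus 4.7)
My plan is to follow the strategy of \cite{A} for the analogous joint-continuity statement, with Theorem \ref{thm_LDB} playing the role that Lemma 1 of \cite{A} played there. It suffices to treat $L$: a direct computation using translation invariance of $\mu$ gives
\[
L^\prime(\beta,D) = L(\beta,D) - \tfrac{1}{2}\int \log\abs{d(x)}\,\ud\mu(x) ~\mbox{,}
\]
and the right-hand integral is continuous in $D$ by Lemma \ref{eq_lemint}, so continuity of $L^\prime$ follows from continuity of $L$.

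For $L$ itself the strategy is to combine (i) joint continuity of the $n$th-scale averages $L_n(\beta,D)$ with (ii) uniform convergence $L_n \to L$ on a joint neighborhood of any fixed $(\beta_0,D_0) \in DC(r) \times \mathcal{C}_\delta^\omega(\mathbb{T},M_2(\mathbb{C}))$, so that $L$ becomes a uniform limit of continuous functions on such neighborhoods. For (i), the map $(\beta,D) \mapsto D_n$ is clearly continuous into $\mathcal{C}_\delta^\omega(\mathbb{T},M_2(\mathbb{C}))$, and the integrand $\log\norm{D_n(x)}$ is uniformly bounded above by $n\log\norm{D}_\delta$ and below by $\tfrac{1}{2}\sum_{j=0}^{n-1}\log\abs{d(x+j\beta)}$ (using $\norm{M}^2 \geq \abs{\det M}$), the integrability of which is controlled by Lemma \ref{eq_lemint}. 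For (ii), I would observe that $L = \inf_n L_n$ by subadditivity of $nL_n$, so $L$ is automatically upper semicontinuous. For the reverse bound, fix $\epsilon>0$, pick $n$ so that $L_n(\beta_0,D_0) \leq L(\beta_0,D_0)+\epsilon$, and choose an approximant $p/q$ of $\beta_0$ with $q$ large enough that Theorem \ref{thm_LDB} applies at scale $n$ with $\kappa = \epsilon$. Iterating $\tilde D_{kn}(x) = \tilde D_n(x+(k-1)n\beta)\cdots \tilde D_n(x)$ and applying the ULDB to each of the $k$ factors (with a union bound on the exceptional set), together with Lemma \ref{lem_ldb4} to control its contribution to the logarithm via the determinant, yields $L_{kn}(\beta,\tilde D) \geq L_n(\beta,\tilde D) - 2\epsilon$. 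Sending $k \to \infty$ gives $L(\beta,\tilde D) \geq L_n(\beta,\tilde D) - 2\epsilon$ uniformly on the neighborhood, and combined with the continuity of $L_n$ from (i), produces $L(\beta,\tilde D) \geq L(\beta_0,D_0) - O(\epsilon)$.

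The main obstacle, and what prevents this from being a verbatim repetition of Theorem \ref{thm_cont_new}, is verifying that the constants in Theorem \ref{thm_LDB} remain uniform as $\beta$ varies in $DC(r)$. Since $r$ is fixed the exponent $\eta = \eta(r)$ is uniform. For the approximants, the crucial observation is that if $p/q$ is an approximant of $\beta_0$ with $\abs{\beta_0 - p/q} < \tfrac{1}{2q^2}$, then every $\beta$ with $\abs{\beta-\beta_0} < \tfrac{1}{2q^2}$ satisfies $\abs{\beta - p/q} < \tfrac{1}{q^2}$ with the same denominator $q$; hence approximants of $\beta_0$ furnish approximants for an entire $\beta$-neighborhood, and one can take $q$ arbitrarily large within it. The Lojasiewicz exponent $\alpha$ for $d$ is stable on a $D$-neighborhood by Theorem \ref{thm_opentrans}, and inspection of the proof of Theorem \ref{thm_LDB} shows that all remaining constants $c, C, c_2, c_3, c_4$ can be chosen uniformly on a joint $(\beta,D)$-neighborhood inside $DC(r) \times \mathcal{C}_\delta^\omega(\mathbb{T},M_2(\mathbb{C}))$. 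Running the argument of Theorem \ref{thm_cont_new} with these uniform versions then yields the desired joint continuity.
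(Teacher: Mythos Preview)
Your proposal is correct and takes essentially the same approach as the paper, which does not give an independent argument but simply asserts that the proof of Theorem~2 in \cite{A} carries over once Lemma~1 of \cite{A} is replaced by the uniform large deviation bound of Theorem~\ref{thm_LDB}. Your sketch fleshes out precisely this transplant, including the reduction of $L^\prime$ to $L$ via Lemma~\ref{eq_lemint} and the key observation that a single approximant $p/q$ of $\beta_0$ serves all $\beta$ in a $\tfrac{1}{2q^2}$-neighborhood; this is exactly what is needed to run the argument of \cite{A} jointly in $(\beta,D)$.
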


Note that Theorem \ref{thm_cont_2} does {\em{not}} give continuity of the LE under rational approximants of the frequency due to the Diophantine condition imposed. 

If however the $\det D(x)$ is bounded away from zero Theorem \ref{thm_cont_2} can be strengthened using the continuity statement from \cite{C}. For $\delta>$ fixed, define the space
\begin{equation}
\mathcal{B}_\delta^\omega(\mathbb{T},M_2(\mathbb{C})) := \left\{ D \in \mathcal{C}_\delta^\omega(\mathbb{T},M_2(\mathbb{C})) : \mathcal{N}(\det D; \abs{\im(z)} \leq \delta) = 0 \right\} ~\mbox{.}
\end{equation}
By Proposition \ref{prop_semicont} $\mathcal{B}_\delta^\omega(\mathbb{T},M_2(\mathbb{C}))$ is open in $\mathcal{C}_\delta^\omega(\mathbb{T},M_2(\mathbb{C}))$.

\begin{theorem} \label{thm_cont_freq}
If $\beta_0$ is irrational, $L^\prime(. , .), L(. , .): \mathbb{R} \times \mathcal{B}_\delta^\omega(\mathbb{T},M_2(\mathbb{C})) \to \mathbb{R}$ are jointly continuous at $(\beta_0,.)$.
\end{theorem}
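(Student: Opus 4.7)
The plan is to deduce Theorem \ref{thm_cont_freq} from the continuity statement of \cite{C}, which handles the case when the determinant is bounded away from zero and, crucially, requires no arithmetic assumption on the frequency. The key algebraic ingredient is a holomorphic square root of $\det D$, available precisely because $D \in \mathcal{B}_\delta^\omega$.

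First, I would use Proposition \ref{prop_semicont}(i) applied with $K=\{|\im z|\leq \delta\}$ to observe that in a $\|\cdot\|_\delta$-neighborhood of $D$, every cocycle $\tilde D$ still satisfies $\mathcal{N}(\det \tilde D;K)=0$; thus $|\det\tilde D(z)|$ is bounded below uniformly on the strip. Since the strip is simply connected, a holomorphic branch of $\sqrt{\det \tilde D(z)}$ exists and depends $\|\cdot\|_\delta$-continuously on $\tilde D$, so that
\begin{equation}
\hat D(x) := \frac{1}{\sqrt{\det \tilde D(x)}}\, \tilde D(x)\ \in\ \mathcal{C}_\delta^\omega(\mathbb{T}, \mathrm{SL}(2,\mathbb{C}))
\end{equation}
is a well-defined, $\|\cdot\|_\delta$-continuous function of $\tilde D$ near $D$.

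Second, I would telescope the product representation of $\hat D_n$ to obtain
\begin{equation}
L(\beta,\tilde D) = L(\beta,\hat D) + \tfrac{1}{2}\int\log|\det\tilde D(x)|\,d\mu(x),
\end{equation}
and, using the identity $\big|\sqrt{\det\tilde D}\big|=\sqrt{|\det\tilde D|}$, the analogous relation
\begin{equation}
L^\prime(\beta,\tilde D) = L(\beta,\hat D).
\end{equation}
Now apply the joint continuity result of \cite{C} to the term $L(\beta,\hat D)$: this is a cocycle in $\mathrm{SL}(2,\mathbb{C})$ (hence a fortiori with determinant bounded away from zero), and the hypothesis on $\beta_0$ in \cite{C} is only irrationality. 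Composing with the $\|\cdot\|_\delta$-continuous map $\tilde D\mapsto \hat D$, we conclude that $(\beta,\tilde D)\mapsto L(\beta,\hat D)$ is jointly continuous at $(\beta_0, D)$.

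Finally, the correction $\tfrac{1}{2}\int \log|\det \tilde D|\,d\mu$ in the first display is $\|\cdot\|_\delta$-continuous in $\tilde D$ at $D$ by Lemma \ref{eq_lemint} (valid since $\det D\not\equiv 0$) and is independent of $\beta$, so joint continuity transfers to $L$ itself, and the second display then gives it for $L^\prime$. The only real issue is bookkeeping: one must verify that the topology under which \cite{C} proves its joint continuity is no finer than the $\|\cdot\|_\delta$-topology we are using, and that the result there indeed applies at an arbitrary irrational $\beta_0$ (including rational approximants), but both points are already within the scope of \cite{C}. Aside from this, the argument is purely the algebraic decomposition above combined with openness of $\mathcal{B}_\delta^\omega$, so no analogue of the uniform large deviation bound of Section \ref{sec_unifdevbd} is required.
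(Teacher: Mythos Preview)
Your overall strategy---reduce everything to the result of \cite{C} and then add back the determinant correction via Lemma~\ref{eq_lemint}---is exactly the paper's strategy, and once stated correctly it is even a bit cleaner than the paper's written argument (which routes the $L'$ case through an explicit rational--approximant computation of the same correction). However, there is a genuine gap in your reduction.

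The problem is the sentence ``since the strip is simply connected, a holomorphic branch of $\sqrt{\det\tilde D(z)}$ exists.'' Elements of $\mathcal{C}_\delta^\omega(\mathbb{T},\cdot)$ are $1$\nobreakdash-periodic, so the relevant domain is the cylinder $\mathbb{T}_\delta$, which is \emph{not} simply connected. A holomorphic square root of $\det\tilde D$ certainly exists on the universal cover $\{|\im z|\le\delta\}\subset\mathbb{C}$, but it will satisfy $s(z+1)=\pm s(z)$, and the sign is $-1$ precisely when the winding number of $\det\tilde D$ along $\mathbb{T}$ is odd (e.g.\ $\det\tilde D(x)=e^{2\pi i x}$). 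In that case $\hat D$ is only $2$\nobreakdash-periodic and does \emph{not} lie in $\mathcal{C}_\delta^\omega(\mathbb{T},\mathrm{SL}(2,\mathbb{C}))$, so you cannot invoke \cite{C} as stated.

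The fix is simply to drop the square root altogether. As the paper notes in the introduction, \cite{C} already covers analytic cocycles with determinant bounded away from zero, not just $\mathrm{SL}(2,\mathbb{C})$; since membership in $\mathcal{B}_\delta^\omega$ means exactly that $\det\tilde D$ is zero-free on the closed strip, you may apply \cite{C} directly to $\tilde D$ to get joint continuity of $L$ at $(\beta_0,D)$. For $L'$, integrate your first displayed identity in $x$ (which uses only $|s(z)|=\sqrt{|\det\tilde D(z)|}$ and hence does not require $s$ to be periodic) to obtain
\[
L'(\beta,\tilde D)=L(\beta,\tilde D)-\tfrac{1}{2}\int_{\mathbb{T}}\log|\det\tilde D(x)|\,d\mu(x),
\]
valid for every $\beta$; joint continuity of $L'$ then follows from that of $L$ together with Lemma~\ref{eq_lemint}. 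This is precisely the content of the paper's proof, without the detour through $\hat D$.
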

\begin{proof}
The statement is known for $L(.,.)$ \cite{C}. To prove continuity for $L^\prime$, let $\beta$ be irrational and fix a $D \in \mathcal{B}_\delta^\omega(\mathbb{T},M_2(\mathbb{C}))$. It suffices to show that if $r_n = \frac{p_n}{q_n}$ is a sequence of {\em{rational}} approximants of $\beta$, $(p_n,q_n)=1$, and $\lVert D_n -D \rVert_\delta \to 0$ we have $L^\prime(r_n, D_n) \to L^\prime(\beta,D)$.

First, notice that for any $n$, $r_n \in \mathbb{Q}$ implies
\begin{equation}
L(r_n, D_n^\prime) = L(r_n, D) - \frac{1}{2 q_n} \sum_{j=0}^{q_n-1} \log \abs{\det D(x+j r_n)} ~\mbox{.}
\end{equation}
Thus, the claimed continuity property of $L^\prime(.,.)$ is reduced to establishing
\begin{equation} \label{eq_cont_freq}
\frac{1}{q_n} \sum_{j=0}^{q_n-1} \log \abs{\det D(x+j r_n)} \to \int_\mathbb{T} \log \abs{\det D(x)} \ud x ~\mbox{,}
\end{equation}
as $n \to \infty$.

Using harmonicity of $\log \det D(x)$ on $\abs \im(z) \leq \delta$,
\begin{equation}
 \frac{1}{q_n} \sum_{j=0}^{q_n-1} \log \abs{\det D(x+j \beta)} \to \int_\mathbb{T} \log \abs{\det D(x)} \ud x ~\mbox{,}
\end{equation}
uniformly on $\mathbb{T}$ as $n\to \infty$.

Hence, (\ref{eq_cont_freq}) follows by successive approximation also making use of Lemma \ref{eq_lemint}:
\begin{equation}
\begin{split}
\left \vert \frac{1}{q_n} \sum_{j=0}^{q_n-1} \right. &\left.  \log \abs{\det D_n(x+j r_n)} - \int_\mathbb{T} \log \abs{\det D_n(x)} \ud x \right \vert \leq \frac{1}{q_n} \sum_{j=0}^{q_n-1}   \big \vert\log \left\vert \det D_n(x+j r_n)\right\vert \\ 
 & - \log \left\vert \det D(x+ j \beta)\right\vert  \big \vert + \left \vert \frac{1}{q_n} \sum_{j=0}^{q_n-1} \log \abs{\det D(x+j \beta)} - \int_\mathbb{T} \log \abs{\det D(x)} \ud x \right\vert \\ & + \left\vert \int_\mathbb{T} \log \abs{\det D(x)} \ud x - \int_\mathbb{T} \log \abs{\det D_n(x)} \ud x  \right \vert ~\mbox{.}
\end{split}
\end{equation}
\end{proof}

\bibliographystyle{amsplain}

\end{document}